\newcommand{\C}{\mathbb {C}}
\newtheorem{theorem}{Theorem}
\newtheorem{lemma}[theorem]{Lemma}
\newtheorem{corollary}[theorem]{Corollary}
\newtheorem*{definition}{Definition}
\title{On hyperbolic cobordisms and Hurwitz classes of holomorphic coverings}
\author{Carlos Cabrera, Peter Makienko and Guillermo Sienra.}
\begin{document}

\maketitle
\footnotetext{This work was partially supported by PAPIIT IN102515 and CONACYT CB15/255633.}
 \begin{abstract}
In this article we show that for every collection $\mathcal{C}$ of an even number of polynomials, all of the 
same degree $d>2$ and in general position, there exist two hyperbolic $3$-orbifolds $M_1$ and $M_2$ with a M\"obius morphism $\alpha:M_1\rightarrow M_2$ such that the restriction of $\alpha$
to the boundaries  $\partial M_1$ and $\partial M_2$ forms a collection of maps $Q$ in the same conformal Hurwitz class 
of the initial collection $\mathcal{C}$. Also, we discuss the relationship between conformal 
Hurwitz classes of rational maps and classes of continuous isomorphisms of sandwich 
products  on the set of rational maps.  
\end{abstract}

\section{Introduction}

Cobordism theory has been studied widely since it was introduced by H. Poincar\'e 
in the context of  homology theory. Also R. Thom studied cobordism of embeddings.  Since then 
 there is an interest in cobordism of functions, for instance functions with stable singularities. 
Cobordims can be endowed with geometric structures such as symplectic structures, flat 
connections or complex structures.

For example, start with a pair of Kleinian groups $\Gamma_1$ and $\Gamma_2$ such that 
$\Gamma_1$ is a subgroup of finite index in $\Gamma_2$. The inclusion map generates a M\"obius morphism $\alpha:M(\Gamma_1)\rightarrow M(\Gamma_2)$ which is a finite degree orbifold covering. Since $\partial M(\Gamma_1)$ may be disconnected, the restriction 
$f:=\alpha|_{\partial M(\Gamma_1)}$ forms a collection of finite
degree holomorphic coverings from the components of $\partial M(\Gamma_1)$ to the components of $\partial M(\Gamma_2)$. 
In this situation, it is natural to say that the collection $f$ forms
a hyperbolic cobordism.

 With this point of view we avoid the homological 
language and will be interested in the following inverse problem: 

Given a collection $Q$ of  holomorphic finitely degree (orbifold) coverings, does there exists 
a pair of Kleinian groups  and a M\"obius morphism $\alpha$ which is conformally equivalent to 
the collection $Q$? 

Another motivation to this question is the  relational dictionary between rational maps and 
Kleinian groups. For this reason, the collection of maps will be
often  taken as a collection of rational endomorphisms of the Riemann sphere.  The main results of this 
article are Theorems \ref{TheoremA} and \ref{TheoremB}  below, these  are proven in Sections \ref{proofA}, 
\ref{ProofHurwitz} and \ref{proofTheoremB}. 

In the last section, we  characterize algebraically when two rational maps define the 
same conformal Hurwitz class. Also we briefly remark that the Hurwitz class of a 
rational map $R$ can be presented as a space of quasiconformal deformations of a 
semigroup of holomorphic correspondences and discuss the related questions. 

From now on our surfaces are compact surfaces with finitely many 
punctures which admit a hyperbolic (orbifold) structure of finite type. However, some 
of our results can be extended to the case of infinite type. 

We start with the following definition.
\begin{definition}
A branched covering of finite degree $d$ is a 
triplet $(R,S,S')$ where $S$ and $S'$ are finite collections of Riemann surfaces 
 and $R:S\rightarrow S'$ is a continuous surjective mapping so that 

\begin{enumerate}
 \item If $Y\subset S$ is a component then $Z=R(Y)\subset S'$ is 
 also a component and the restriction $R:Y\rightarrow Z$  is a degree $d_Y\leq d$ branched 
 covering map.  
\item There is a component $\tilde{Y}$ such that $d_{\tilde{Y}}=d.$
 \end{enumerate}
 We say that a branched  covering $R$ is \textbf{simple} whenever the number 
of components of $S$ coincides with the number of components of $S'.$ If $R$ is 
simple and $S$ is connected then we say that $R$ is a 
 \textbf{single} branched covering.
\end{definition}

Branched coverings between Riemann surfaces have been studied widely 
in the literature. We are interested in the following basic examples. 

\begin{enumerate}
 \item Rational maps, these are single branched self coverings of the sphere.
 \item Let $\Gamma$ be a Kleinian group and $G<\Gamma$ be a subgroup with 
$\Omega(G)=\Omega(\Gamma)$ then the natural projection $R:
S(G)\rightarrow 
S(\Gamma)$ is a holomorphic branched covering map. When $\Omega(G)\neq \Omega(\Gamma)$, in general, the inclusion map does not induces a holomorphic covering. Here $\Omega(\Gamma)$ denotes the discontinuity set of $\Gamma$.

It is known that the equality $\Omega(G)=\Omega(\Gamma)$ holds whenever $G$ is either normal  or a 
subgroup of finite index. However, there are examples of 
groups $G$ with $\Omega(G)=\Omega(\Gamma)$ but such that $G$ is not normal and has infinite index in 
$\Gamma.$ If $G$ and $\Gamma$  are finitely generated and the limit set of $\Gamma$ 
is not a subset of a round circle, then by the Ahlfors finiteness theorem $G$  necessarily has finite index in $\Gamma.$

\end{enumerate}

Any branched covering can be regarded as a collection of single coverings, we call
each of them a \textit{single component} of the branched covering. We say that a branched 
covering $R$ is a holomorphic covering whenever every single component is a 
holomorphic (orbifold) unbranched covering between hyperbolic surfaces (orbifolds).

Given a holomorphic covering $(R,S,S')$, we can improve  $(R,S,S')$  into a 
simple covering in the following way: if $X$ and $Y$ are components of $S$ so that 
$R(X)=R(Y)=Z\subset S'$ then consider a conformal copy $Z'$ of $Z$. 
Let $R':X\rightarrow Z'$ be the respective holomorphic covering, now the holomorphic 
covering $X\sqcup Y\rightarrow R'(X)\sqcup R(Y)$ is simple. By induction on the number of components we construct 
a simple holomorphic covering  $(Q,T,T')$ such that for every 
single component $R:X\rightarrow Z$ of $(R,S,S')$ there exists a single component 
$Q:X'\rightarrow Z'$ of $(Q,T,T')$
 and two conformal homeomorphisms $\phi:Z\rightarrow Z'$ and $\psi:X\rightarrow 
 X'$ so that $\phi\circ R=Q\circ \psi.$ The previous discussion also motivates the following definition of Hurwitz classes for non-connected branched coverings. 
  
\begin{definition}
Let $f:S \rightarrow S'$ be a branched covering,  the {\bf Hurwitz class} 
$H(f)$ of $f$ consists of the triples $(g,N,N')$ so that $g:N\rightarrow N'$ 
is a branched covering  and there exist orientation preserving homeomorphisms 
${\varphi}:S\rightarrow N$ and ${\psi}: S' \rightarrow N'$ such  that ${\varphi} \circ 
f=g \circ {\psi}$. 
\end{definition}
If $f:S\rightarrow S'$ is a single branched covering then $H(f)$ coincides with the 
classical Hurwitz space of $f$. If $f$ is a simple branched covering, then 
$$H(f)=\bigotimes_Y H(f, Y, f(Y))$$ where the product is taken over the 
connected components $Y$ of $S.$ 

Given a holomorphic map $f$, the set $$CH(f)=\{(g,N,N')\in H(f), \phi, \psi \textnormal{ 
conformal}\}$$ is called the \textit{conformal Hurwitz class} of the holomorphic covering $f.$

For example, if $f:\overline{\C}\rightarrow \overline{\C}$ is a 
finite degree branched covering then $H(f)$ contains a rational map of the same 
degree. When $f$ is a rational map the set $H(f)\cap Rat(\C)=S(f)$ is known as the 
\textit{Speisser class} of $f$ and was introduced into holomorphic dynamics by A. Eremenko and 
M. Lyubich in \cite{EreLyub}. By Teichm\"uller's Theorem,  if $f$ is  holomorphic of finite degree and if $g\in  H(f)$ is holomorphic, then  the associated homeomorphisms $\phi$ and 
$\psi$ can be taken  quasiconformal. For general holomorphic maps, this is not true. It is not even clear whether $\phi$ and $\psi$ can be taken to be local quasiconformal maps for an 
infinite degree holomorphic map $f.$ A plausible counterexample is an entire map $f$ with
positive Lebesgue set of singularities of $f^{-1}.$

The \textit{hyperbolic cobordism} between two holomorphic coverings $(R_1,S_1,S'_1)$ and 
$(R_2,S_2,S'_2)$ is given by the triple $(\Re,M,M')$ satisfying the following conditions:
\begin{enumerate}
\item There are geometrically finite Kleinian groups $\Gamma,\Gamma'< PSL(2,\C)$ such 
that 
$$M=M(\Gamma)=(B\bigcup{\Omega}({\Gamma}))/{\Gamma}$$ and $$M'=M(\Gamma')=(B\bigcup{\Omega}
(\Gamma)')/{\Gamma}'.$$  Hence, $M$ and $M'$ are 
oriented hyperbolic $3$-orbifolds with natural projections ${\pi}:B\bigcup{\Omega}
({\Gamma}) \rightarrow M$, 
and ${\pi}':B\bigcup{\Omega}({\Gamma}') \rightarrow M'$. The map ${\Re}:M \rightarrow 
M'$ is a \textit{surjective M\"obius morphism}, that is, there exists an orientation preserving M\"obius 
map $\alpha$ such that the following diagram commutes: 

\begin{displaymath}
    \xymatrix{B\bigcup{\Omega}({\Gamma})
         \ar[d]_{\pi} \ar[r]^{\alpha} & 
\ar[d]^{\pi'}B\bigcup{\Omega}({\Gamma}')\\M
           \ar[r]^{\Re}  & M'}\tag{1}\label{diag.basic}
\end{displaymath}

 \item The boundary ${\partial}M$ is conformally equivalent to $\bigsqcup S_{i}$ and 
 ${\partial}M'$ conformally equivalent to $\bigsqcup S'_{i}$, so that 
 $$(\partial M,\partial M',\Re|_{\partial M})\in \bigotimes_{i=1}^{2} CH(S_i,S'_i,R_i).
 $$
 \end{enumerate}

 Hence $\Re$ is a local isometry  between the respective hyperbolic metrics on 
 $M(\Gamma)$ and $M(\Gamma')$ induced by the Kleinian groups $\Gamma$ and 
 $\Gamma'$, respectively.

Given two holomorphic coverings $R_1$ and $R_2$, if there exists a hyperbolic 
cobordism between $R_1$ and $R_2$, we will say that $R_1\sqcup R_2$ 
forms a \textit{cobordant family of holomorphic coverings}, or that $R_1$ is 
\textit{hyperbolically cobordant} to $R_2.$

Given a finite degree holomorphic branched covering $R:M \rightarrow N$, between 
Riemann surfaces  $M$ and $N$, there are many ways to transform $R$ into a holomorphic 
covering between hyperbolic orbifold structures supported on $M$ and $N$. We consider the 
simplest construction depending on the ramification data of $R$ and a finite subset 
$A\subset N$ as follows: first restrict $R$ to $R:\{S=M\setminus R^{-1}(A)\}\rightarrow 
\{S'=N\setminus A\}$. Second, using the ramification data of $R$  produce  
orbifold structures on $S$ and $S'$ so that $R$ is a holomorphic (orbifold) covering 
between hyperbolic orbifold structures supported on $S$ and $S'$, respectively.
In particular, if $A=\emptyset$, then the canonical orbifold structure on $M$ and $N$ 
defined by the ramification data of $R$ must be hyperbolic. For instance, in the case 
where $R(z)=z^n$, the set $A$ must be non-empty and $card(A\setminus \{0,\infty\})\geq 
1$.

If $A=V(R)$ is the set of critical values of $R$, and the surfaces $S$ and $S'$ are hyperbolic then 
the triple $(R,S,S')$ is called the \textit{canonical holomorphic representative} of the 
holomorphic branched covering $R:M\rightarrow N.$

\medskip

\textbf{Examples}.
\begin{enumerate}

\item The {\bf null cobordism}  where $S$ and 
$S'$ are connected is related to the extension of a single holomorphic covering to the 
respective 3-hyperbolic spaces. This situation has been studied in \cite{CabMakSie} with applications 
to holomorphic dynamical systems. In particular, in \cite{CabMakSie} the authors gave the construction of a 
geometric extension for generic rational maps. The present article develops 
the geometrical part of \cite{CabMakSie} in the case of a collection of holomorphic 
coverings.

\item The trivial cobordisms. Consider the identity maps $Id_i:S_i\rightarrow 
S_i$, where $S_i$ is a Riemann surface for $i=1,2.$ Then the existence of a 
cobordism between $Id_1$ and $Id_2$  reduces to the existence of a hyperbolic 
manifold with boundary conformally equivalent to $S_1 \sqcup S_2$. Then we have:

\begin{itemize}
 \item If $S_1$ is anticonformally equivalent to $S_2$, then by the Bers's simultaneous 
 uniformization theorem there exists a Fuchsian group uniformizing the surfaces 
 $S_1$ and $S_2$, so that $S_1\sqcup S_2$ is conformally equivalent to the boundary 
 of a hyperbolic 3-manifold.
 
 \item For any surface $S_1$ consisting of a finite number of hyperbolic components, 
 there exists a connected hyperbolic surface $S_2$  such that $S_1\sqcup S_2$ can be 
 uniformized by a functional geometrically finite group. This uniformization is given by the Klein-Maskit 
 combination theorems in such a way that $S_1\sqcup S_2$ bounds an oriented hyperbolic $3$-orbifold.  This 
 observation will be needed in the proof of Theorem \ref{TheoremA}.

\item Whenever $S$ is a compact hyperbolic closed connected  Riemann surface 
with an even number of cusps, there exists a Schottky type group uniformizing $S$ so 
that $S$ is conformally equivalent to the boundary of a $3$-hyperbolic manifold.

\end{itemize}

\end{enumerate}

\textbf{Connected transitivity}. Given three single holomorphic coverings $R_{1}$, 
$R_{2}$, and $R_{3}$; such that the pairs   $(R_{1},R_{2})$ and  $(R_{2},R_{3})$ 
are each hyperbolically cobordant by manifolds $M_1$ and $M_2$. Assume that the canonical homorphisms 
$\pi_1(S_i)\rightarrow \pi_1(M_i)$ and $\pi_1(S'_i)\rightarrow \pi_1(M'_i)$ are injective, then  
$R_{1}$ is hyperbolically cobordant to $R_{3}$. In fact, this follows from the 
Thurston's hyperbolization theorem.
 
In general, without the single and injective assumptions, is not clear that the manifold, resulting by gluing $M_1$ and $M_2$ along the boundary components associated to $R_2$, is hyperbolic. This is because the result of gluing hyperbolic manifolds along the boundary may not be hyperbolic. For instance, consider 
a geometrically finite hyperbolic 3-manifold $M$ which has an essentially 
embedded annulus $A.$ Let $S_1$ and $S_2$ be not necessarily different
components of $\partial M$ containing the boundary of $A$.
Take a copy of $M$, say $M'$, and make $V=M\sqcup_{S_1\sqcup S_2} M'$  by 
gluing $M$ and $M'$ along $S_1\sqcup S_2$. Then $V$ does not 
accept a hyperbolic metric since $V$ contains a torus which is not homotopic to the ideal boundary of 
$V.$ This indicates that there might be obstacles  to the existence 
of a hyperbolic cobordism between multiple  coverings. 

Now we formulate our first main 
theorem.

\begin{theorem}\label{TheoremA}
Given a simple holomorphic covering $F_1$, there 
exists another holomorphic covering $F_2$ such that  $F_1\sqcup F_2$ forms a family 
of cobordant holomorphic coverings.
\end{theorem}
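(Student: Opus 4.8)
The plan is to realize the M\"obius morphism $\Re$ as the orbifold covering induced by a finite-index inclusion $\Gamma\subset\Gamma'$ of geometrically finite Kleinian groups. With such a choice one has $\Omega(\Gamma)=\Omega(\Gamma')$, the Möbius map $\alpha$ may be taken to be the identity on $B\cup\Omega(\Gamma')$, and $\Re:M(\Gamma)\rightarrow M(\Gamma')$ is simply the natural projection; then the commuting diagram \eqref{diag.basic} holds automatically and $\Re$ is a local isometry. The companion covering $F_2$ will be read off at the end from the boundary components of $\partial M(\Gamma)$ and $\partial M(\Gamma')$ that do not belong to $S_1$ and $S_1'$.

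First I would use that $F_1=(R_1,S_1,S_1')$ is simple to present it as a finite collection of single holomorphic (orbifold) coverings $R_1:X_j\rightarrow Z_j$, $j=1,\dots,m$, each an unbranched covering of finite degree $d_j$ between hyperbolic orbifolds, with $S_1=\bigsqcup_j X_j$ and $S_1'=\bigsqcup_j Z_j$. Each such covering is encoded by a finite-index inclusion of Fuchsian groups $K_j\subset H_j$ with $Z_j=\HP/H_j$, $X_j=\HP/K_j$ and $[H_j:K_j]=d_j$. Next I would build the base orbifold $M'=M(\Gamma')$: invoking the trivial-cobordism observation stated above (the second bulleted item, flagged there as needed here), there is a connected hyperbolic surface so that $S_1'$ together with it bounds an oriented hyperbolic $3$-orbifold uniformized by a \emph{functional} geometrically finite group $\Gamma'$. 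I would arrange the Klein--Maskit combination so that each $Z_j$ is a full boundary component of $M(\Gamma')$, with prescribed conformal type, its orbifold fundamental group appearing as the stabilizer $H_j$ of an invariant component $\Delta_j\subset\Omega(\Gamma')$, and so that $\Gamma'$ inherits a free-product (functional) structure. The point of insisting on a functional group is that its finite-index subgroups are abundant and can be prescribed on the free factors.

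The heart of the argument is the construction of the covering group $\Gamma$. Using the correspondence between finite-index subgroups and finite orbifold coverings, together with Marshall Hall's theorem and the subgroup separability of the free (functional) group $\Gamma'$, I would build a finite-index subgroup $\Gamma\subset\Gamma'$ whose intersection with each boundary subgroup $H_j$ is conjugate to the prescribed $K_j$. The components of $\Omega(\Gamma)/\Gamma$ lying over $Z_j=\Delta_j/H_j$ are then indexed by the double cosets $\Gamma\backslash\Gamma'/H_j$, and the coset of the identity reproduces exactly the single covering $R_1:X_j\rightarrow Z_j$ as $\Delta_j/K_j\rightarrow\Delta_j/H_j$. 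Since $[\Gamma':\Gamma]<\infty$ we get $\Omega(\Gamma)=\Omega(\Gamma')$ and $\Gamma$ is again geometrically finite, so $M=M(\Gamma)$ is an oriented hyperbolic $3$-orbifold and $\Re$ is a surjective Möbius morphism. Setting $S_1=\bigsqcup_j X_j$, letting $S_2$ and $S_2'$ be the remaining boundary components of $M$ and $M'$, and putting $R_2:=\Re|_{S_2}$, the restriction $\Re|_{\partial M}$ lands in $CH(S_1,S_1',R_1)\otimes CH(S_2,S_2',R_2)$, which exhibits $F_1\sqcup F_2$ as a cobordant family of holomorphic coverings.

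I expect the main obstacle to be precisely this controlled construction of $\Gamma$: one must choose a finite-index subgroup of $\Gamma'$ that restricts to the prescribed covering on every boundary subgroup $H_j$ \emph{without} producing unwanted identifications or folding on the leftover boundary, and one must ensure that the ambient orbifold stays genuinely hyperbolic (in contrast to the failure illustrated by the gluing example with an essential torus). This is exactly where the free-type structure of $\Gamma'$ and its subgroup separability are indispensable, and where the trivial-cobordism completion guarantees that whatever residual boundary $S_2\sqcup S_2'$ arises still bounds a hyperbolic $3$-orbifold.
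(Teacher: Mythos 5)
Your proposal is correct and its skeleton coincides with the paper's: uniformize each single component $R_1\colon X_j\to Z_j$ by a finite-index pair of Fuchsian groups $K_j\subset H_j$, use the Klein--Maskit combination theorem (Theorem \ref{th.KleinMaskit}) to realize $S_1'$ together with one connected completion surface as $\partial M(\Gamma')$ for a geometrically finite functional group $\Gamma'\cong H_1\ast\cdots\ast H_m$, and produce the cobordism from a finite-index inclusion $\Gamma\subset\Gamma'$. Where you genuinely differ is at the heart of the matter, the construction of $\Gamma$. The paper builds it explicitly: for two components, with $[\Gamma':\Gamma]=n$, $[G':G]=m$ and coset representatives $\sigma_i$, $g_j$, it takes $H=\langle \Gamma, G, g_j\Gamma'g_j^{-1},\sigma_i G'\sigma_i^{-1}\rangle$, a free product sitting with finite index in $\Gamma'\ast G'$; this explicitness is what identifies the companion $F_2$ as the connected sum $Q_1\#Q_2$ of anticonformal copies (whence the degree formula $\deg Q_0=\sum \deg R_i-n$ stated after Theorem \ref{TheoremA}) plus univalent components, and the general case is then done by induction on the number of components. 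You instead obtain $\Gamma$ abstractly, with $\Gamma\cap H_j=K_j$ for all $j$ simultaneously, by appealing to Marshall Hall's theorem and subgroup separability. That claim is true, and in fact needs less than LERF: by the universal property of the free product, let each $H_j$ act on one fixed finite set so that the orbit of a basepoint is $H_j/K_j$ and every other point is fixed, and let $\Gamma$ be the stabilizer of the basepoint under the induced $\Gamma'$-action (the paper's $H$ is exactly a stabilizer of this kind); if you do want separability instead, it holds here because Fuchsian groups are LERF and free products of LERF groups are LERF. So your route buys a uniform, induction-free existence proof that isolates the group-theoretic content, at the cost of losing the paper's explicit description of $F_2$. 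One sentence of yours does need repair: you cannot take $S_2'$ to be ``the remaining boundary components of $M'$,'' since the total degree of $\Re$ over each $Z_j$ equals $[\Gamma':\Gamma]\geq \max_j d_j$, so whenever the $d_j$ are not all equal there are unavoidable extra components of $\partial M(\Gamma)$ lying over the $Z_j$; you must set $S_2'=\Re(S_2)$, which repeats some of the $Z_j$. This is consistent with the theorem's conclusion---the paper's own companion likewise contains univalent components mapping onto $S'$ and $T'$---but as written your final decomposition of $\Re|_{\partial M}$ into $CH(R_1)\otimes CH(R_2)$ is not quite well defined until this is fixed.
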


Moreover, if $F_1$ has  a single component $R_0$ with degree $deg(R_0)>1$,
then the covering $F_2$ contains only one single component, 
say $Q_0$,  with degree bigger than $1$ and $deg(Q_0)=\sum_{i=0} ^{n} deg 
(R_i)-n$ where $R_i$ are single components of $F_1$ and $n+1$ is the number of these components. 

We need the following definition.

\begin{definition}
A holomorphic covering $Q:S\rightarrow S'$ is called an anticonformal copy of the 
holomorphic covering map $R:T\rightarrow W$ if there are anticonformal 
 homeomorphisms $\alpha:S\rightarrow T$ and $\beta:S'\rightarrow W$ so that 
 $\beta\circ Q=R\circ \alpha.$ Given a holomorphic covering $R$, we call the Hurwitz 
class $H(R)$ \textbf{symmetric} if and only if $H(R)$ contains an anticonformal copy of an 
 element $g\in H(R)$. Finally, we say that a holomorphic covering is symmetric if its 
 Hurwitz class is symmetric. 
\end{definition}

Let us note that if $f:\overline{\C}\rightarrow \overline{\C}$ is  a branched covering, then $H(f)$ is  
symmetric whenever $H(f)$ contains a real rational map, that is, all coefficients are 
real. In particular, if $B$ is a Blaschke endomorphism, then $H(B)$ is symmetric. Moreover, if 
$ \bigsqcup_B H(B)$ is the union of all Hurwitz classes of Blaschke endomorphisms and 
$g\in \bigsqcup_B H(B)$ then $H(g)$ is symmetric. Also for every natural number $d$, by 
the Theorem 3.4 in \cite{Edmondscover}, the set $ \bigsqcup_B H(B)\cap 
Rat_d(\C)$ is connected  and contains an open and everywhere 
dense subset of the space of $Rat_d(\C).$ Here 
$Rat_d(\C)$ denotes the set of rational maps of degree $d$. A Blaschke endomorphism is a rational map $B$ with $B^{-1}(\mathbb{D})=\mathbb{D}$ where $\mathbb{D}$ is the open unit disk in $\mathbb{C}$. 

In general it is not clear that the Hurwitz class of any finite degree branched covering between 
closed Riemann surfaces is symmetric. But we believe that is true for Hurwitz classes of 
rational maps.

\begin{definition}
Two cobordant holomorphic coverings $R_1$ and $R_2$ are  called \textbf{simply 
cobordant} if and only if $M$ and $M'$ are homeomorphic to $S_1\times [0,1]$ and 
$S'_1\times [0,1]$.
\end{definition}

\begin{theorem}\label{Th.Hurwitz}
Two symmetric single holomorphic coverings $R_1$ and $R_2$ belong to the same Hurwitz 
class if and only if $R_1$ and $R_2$ are simply cobordant. 
\end{theorem}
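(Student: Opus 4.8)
The plan is to first restate the symmetry hypothesis in a usable form. Write $H^-(f)$ for the set of branched coverings that are conjugate to $f$ by \emph{orientation-reversing} homeomorphisms; then $H^-(f)=H(\bar f)$, where $\bar f$ is the anticonformal copy of $f$. I would check that $R_1$ is symmetric if and only if $H(R_1)=H^-(R_1)$: if $H(R_1)$ contains an anticonformal copy $h$ of some $g\in H(R_1)$, then $h\in H^-(g)=H^-(R_1)$ while $h\in H(R_1)$, and since both are conjugacy classes this forces $H(R_1)=H^-(R_1)$; the converse is immediate. Thus the hypothesis says precisely that the Hurwitz class of $R_1$ (equivalently of $R_2$) is invariant under orientation reversal, and this is the form I will use in both implications.

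For the implication simply cobordant $\Rightarrow$ same Hurwitz class, I would argue from the product structure. If $R_1,R_2$ are simply cobordant then $M=M(\Gamma)\cong S_1\times[0,1]$ is quasifuchsian, with conformal boundary components $X=\Omega^+(\Gamma)/\Gamma$ and $Y=\Omega^-(\Gamma)/\Gamma$ that are conformal copies of the sources of $R_1$ and $R_2$, on which $\Re$ restricts to conformal copies of $R_1$ and $R_2$. Since $M'\cong S_1'\times[0,1]$ deformation retracts onto either end, the finite M\"obius covering $\Re$ is, up to homotopy, a product covering $c\times \mathrm{id}_{[0,1]}$ for a single covering $c$ of $S_1'$; restricting to the two ends returns $c$ both times, so the top and bottom boundary coverings are equivalent. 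The one subtlety is orientation: the two ends of an oriented product $\Sigma\times[0,1]$ carry \emph{opposite} boundary orientations relative to the $\Sigma$-factor, so the identification $(x,0)\mapsto(x,1)$, while orientation preserving on $\Sigma$, induces an orientation-reversing homeomorphism between the complex surfaces $X$ and $Y$. Covering this identification through $\Re$ yields an orientation-reversing conjugacy between $R_1$ and $R_2$, i.e. $R_2\in H^-(R_1)$; by symmetry $H^-(R_1)=H(R_1)$, so $R_1$ and $R_2$ share a Hurwitz class.

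For the converse I would build the cobordism by deforming a Fuchsian model. Uniformize $S_1'$ by a Fuchsian group $G'<PSL(2,\R)$ acting on $\mathbb{H}^+$; as $R_1$ is an unbranched (orbifold) covering of degree $d$, it is induced by a subgroup $G<G'$ of index $d$ with $\mathbb{H}^+/G\cong S_1$. Viewing $G<G'$ as acting on $\overline{\C}$ and on $\mathbb{H}^3$, the inclusion gives a M\"obius morphism $M(G)\to M(G')$ whose boundary restriction is $R_1$ on the upper component and the anticonformal copy $\bar R_1\colon \mathbb{H}^-/G\to\mathbb{H}^-/G'$ on the lower one. Now since $R_2\in H(R_1)=H^-(R_1)=H(\bar R_1)$, and by the Teichm\"uller theorem quoted above the conjugacy between the holomorphic maps $R_2$ and $\bar R_1$ may be taken quasiconformal, $R_2$ determines a point of the Teichm\"uller space $T(\bar S_1')$ of the base of $\bar R_1$, hence a $G'$-invariant Beltrami differential $\mu$ supported on $\mathbb{H}^-$ realizing the conformal structure of $S_2'$ downstairs and, by unique lifting through the covering, that of $R_2$. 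Extending $\mu$ by zero on $\mathbb{H}^+$ keeps a $G'$-invariant (hence $G$-invariant) coefficient on $\Omega(G')$; solving the Beltrami equation produces a global quasiconformal $w^\mu$ conjugating $G<G'$ to quasifuchsian groups $\tilde G<\tilde G'$ of index $d$. The associated M\"obius morphism $\Re\colon M(\tilde G)\to M(\tilde G')$ restricts to a conformal copy of $R_1$ on the top and of $R_2$ on the bottom, and since $\tilde G,\tilde G'$ are quasifuchsian we have $M(\tilde G)\cong S_1\times[0,1]$ and $M(\tilde G')\cong S_1'\times[0,1]$, giving exactly a simple cobordism between $R_1$ and $R_2$.

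The main obstacle is the orientation bookkeeping common to both implications: the two ends of a quasifuchsian (product) cobordism always present orientation-reversed copies of one another, so a naive product identification conjugates $R_1$ into the anticonformal class $H^-(R_1)$ rather than into $H(R_1)$, and the Fuchsian model naturally realizes $\bar R_1$, not $R_1$, on the lower end. The symmetry hypothesis $H(R_1)=H^-(R_1)$ is precisely what closes this gap, and it is needed in both directions. The second delicate point, in the converse, is to verify that deforming the Fuchsian covering by a \emph{base-invariant} Beltrami differential sweeps out exactly the orientation-preserving conformal representatives of $H(\bar R_1)$; this uses that $R_1$ is unbranched, so the deformation space of the covering is the Teichm\"uller space of the base with conformal structures lifting uniquely, together with the quoted Teichm\"uller theorem to upgrade the topological conjugacy $R_2\sim\bar R_1$ to a quasiconformal one, while keeping $\mu\equiv 0$ upstairs so that the top boundary stays within $CH(R_1)$.
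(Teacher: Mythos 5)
Your proposal is correct and takes essentially the same approach as the paper: both directions rest on the observation that the two ends of a quasifuchsian product are orientation-reversed copies of one another (so the symmetry hypothesis $H(R_1)=H^-(R_1)$ closes the gap), and the converse is in both cases a Bers-type simultaneous uniformization starting from the Fuchsian model that realizes $R_1$ and its anticonformal copy on the two ends. The only differences are cosmetic: the paper exhibits the orientation reversal through a quasiconformal involution $\tau$ commuting with $\Gamma_2$ instead of your product-homotopy argument, and for the converse it glues two Fuchsian cobordisms (one for $R_1$, one for $R_2$) along their anticonformal ends, whereas you carry out the equivalent one-sided Beltrami deformation explicitly.
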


Theorem \ref{TheoremA} shows that any finite family of coverings can be 
included in a family of cobordant coverings which is non simple and includes 
single univalent components on the boundary. Theorem \ref{Th.Hurwitz} gives 
a condition for when a pair of holomorphic coverings of the Riemann sphere is 
cobordant. 

With Theorem \ref{Th.Hurwitz} at hand we improve  Theorem \ref{TheoremA} into Theorem \ref{TheoremB}. First recall that a holomorphic polynomial  map $P:
\C\rightarrow \C$ of  degree $d>1$  is in general position if there are $d-1$ different finite critical values $V(P)$. It is known that two polynomial maps in general position belong to the same 
Hurwitz class if and only if these polynomials have the same degree. Also, 
every polynomial in general position is symmetric. 

\begin{theorem}\label{TheoremB} The canonical holomorphic 
representatives of every collection of an even number of polynomials in general position of 
the same degree $d>2$ form a hyperbolic cobordism.
\end{theorem}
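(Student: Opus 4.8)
The plan is to split the even collection into two equal halves, complete each half to a cobordant family by Theorem \ref{TheoremA}, and then cancel the two completions against one another by means of Theorem \ref{Th.Hurwitz}. Write the collection as $\mathcal{C}=\{P_1,\dots,P_{2m}\}$ and set $F_1=P_1\sqcup\cdots\sqcup P_m$ and $F_2=P_{m+1}\sqcup\cdots\sqcup P_{2m}$, two simple holomorphic coverings, each with $m$ single components of degree $d$. I would first record the two facts quoted above: polynomials in general position of a fixed degree all lie in a single Hurwitz class, and each of them is symmetric. Applying Theorem \ref{TheoremA} to $F_1$ yields a partner $G_1$ with $F_1\sqcup G_1$ cobordant, and by its \emph{moreover} clause $G_1$ has a single nontrivial component $Q_1$ of degree $D=md-(m-1)$ together with univalent components; the same construction applied to $F_2$ gives $G_2$ with a single nontrivial component $Q_2$, again of degree $D$. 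It is exactly the evenness of $\mathcal{C}$ that makes the two halves coincide as Hurwitz data, so that $Q_1$ and $Q_2$ are forced into the same Hurwitz class; and because the construction of Theorem \ref{TheoremA} may be run symmetrically on the symmetric input $F_i$, both $Q_1$ and $Q_2$ are symmetric.

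Now $Q_1$ and $Q_2$ are symmetric single holomorphic coverings in the same Hurwitz class, so Theorem \ref{Th.Hurwitz} makes them simply cobordant, producing a product cobordism $\mathcal{N}$ with quasi-Fuchsian ends; the matching univalent components of $G_1$ and $G_2$ are paired off by trivial cobordisms, using the product uniformizations of the trivial-cobordism examples. I would then compose the three cobordisms $F_1\sqcup G_1$, $\mathcal{N}$, and $G_2\sqcup F_2$, gluing the geometrically finite orbifolds along the boundary components carrying $G_1$ and $G_2$. This connected-transitivity step deletes the auxiliary coverings $G_1,G_2$ and leaves a single surjective M\"obius morphism whose boundary restriction is $F_1\sqcup F_2=\mathcal{C}$, realized inside $\bigotimes_{i}CH(S_i,S_i',P_i)$, which is precisely the assertion.

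The hard part is this final gluing, for the composition of hyperbolic cobordisms need not remain hyperbolic, as the essential-torus example preceding Theorem \ref{TheoremA} shows. The connected-transitivity mechanism quoted there requires the coverings glued along to be \emph{single} and the boundary $\pi_1$-inclusions to be injective, so I would lean on the facts that $Q_1,Q_2$ are single and that the ends furnished by Theorem \ref{Th.Hurwitz} are geometrically finite with incompressible boundary; the symmetry of $Q_1,Q_2$ is what supplies the anticonformal identification along which the two Bers ends are matched. Granting injectivity of the relevant inclusions and the absence of new essential annuli across the gluing surfaces, Thurston's hyperbolization theorem equips the amalgamated orbifold with a hyperbolic structure, while the Klein--Maskit combination theorems keep it geometrically finite. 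Verifying these incompressibility and atoroidality conditions for the specific coverings $Q_i$, and confirming that Theorem \ref{TheoremA} can indeed be carried out symmetrically so that $Q_1$ and $Q_2$ agree as symmetric Hurwitz data, is where the genuine work will lie.
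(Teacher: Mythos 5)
Your proof collapses exactly at the step you defer to the end, and the problem there is not one of unfinished verification: the hypotheses you would need are demonstrably \emph{false} for the cobordisms that Theorem \ref{TheoremA} actually produces. Those cobordisms are built from the Klein--Maskit combination theorem (Theorem \ref{th.KleinMaskit}), so the uniformizing group $H$ is a free product and the manifolds are disk sums. In particular the boundary surface $U$ carrying your $Q_1$ is an internal connected sum of the surfaces $S^*,T^*$ and copies of $(S')^*,(T')^*$, glued along curves that are essential in $U$ but bound compressing disks in $M(H)$ (the projections of the hyperbolic planes spanned by the combination circles); hence $\pi_1(U)\rightarrow \pi_1(M(H))$ is \emph{not} injective, and likewise on the target side and for $Q_2$. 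Moreover the coverings $G_1,G_2$ you glue along are not single (each contains $Q_i$ plus the univalent components), so both hypotheses of the connected-transitivity mechanism you invoke fail on precisely the boundary components being identified. Worse, each product factor $M(\Gamma_i)\cong S'\times [0,1]$ of the disk sum contributes essential annuli running from a univalent component to $U$, so your simultaneous gluing along the $Q_i$'s and all univalent components reproduces exactly the configuration of the paper's warning example, where gluing along boundaries meeting an essential annulus creates an essential torus and destroys hyperbolicity. "Granting injectivity" is therefore not a gap a reader can close --- the assumption is false --- and no argument for atoroidality of the amalgam is even sketched. (Your preliminary assertions that $Q_1$ and $Q_2$ lie in one Hurwitz class and are symmetric are also unproved, since Hurwitz class and symmetry of a connected sum of coverings must be shown to depend only on the factors, but these are minor next to the gluing issue.)

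The paper's proof exists precisely because this naive amalgamation cannot be carried out, and it takes a genuinely different route that never glues two distinct hyperbolic manifolds. First, Theorem \ref{Purecobordism}: the formal mating of $P_1$ and $P_2$ gives a rational map whose canonical representative is uniformized by Fuchsian groups $\Gamma_1<\Gamma_2$, and pinching (Theorem \ref{th.pinching}) along the equatorial curve $\delta$ and its connected preimage yields a \emph{pure} cobordism realizing $R\sqcup P_1\sqcup P_2$ with no univalent components at all. Then, instead of cancelling the auxiliary map $R$ against a second copy by gluing, the paper \emph{doubles by reflections}: using Brooks's deformation theorem (Theorem \ref{th.geofintor}) and the disk-pattern construction (Theorem \ref{th.Theorem6}, Lemma \ref{lemma.kap}), one adjoins to $\Gamma_1<\Gamma_2$ the reflections in a pattern of round disks covering exactly the boundary components carrying $R$, passes to the orientation-preserving subgroups $H_1<H_2$, and obtains a cobordism whose boundary is $P_1\sqcup P_2$ together with anticonformal copies of them; since all degree-$d$ polynomials in general position form a single symmetric Hurwitz class, a quasiconformal deformation converts those copies into $P_3\sqcup P_4$ (Theorem \ref{thIndbas}), and iterating the doubling, together with Theorem \ref{Th.Hurwitz}, handles any even number. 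If you want to rescue your outline, the reflection-doubling mechanism is what must replace your gluing step; nothing in the Theorem \ref{TheoremA} construction supplies the incompressibility or atoroidality your version needs.
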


\par\medskip\noindent \textbf{Acknowledgments.}
The authors would like to thank M. Kapovich for pointing out the ideas in Brooks's deformation 
theorem which simplified a previous version of the proof of Theorem \ref{TheoremB}.

\section{Some background on Kleinian groups}

For the convenience of the reader here we collect some facts from Kleinian group theory which will be used in this article. We follow the books of M. Kapovich \cite{KapovichHyp} and A. Marden \cite{MardenBook} which give a modern introduction to Kleinian groups.

Denote by $B$ the 
Poincar\'e model of the hyperbolic $3$-space, that is, the unit ball in $\mathbb{R}^3$ 
equipped with the Poincar\'e metric. Given a group $\Gamma$ of automorphisms of the Riemann sphere. We denote by $\Omega(\Gamma)$ the discontinuity set of $\Gamma$ on $\overline{\C}$.  The isometry group of $B$ acts on the Riemann sphere $\overline{\C}=\partial B$ as the whole group of M\"obius transformations $Mob(\C)$ including anticonformal automorphisms.  A discrete subgroup  $\Gamma$ of $ Isom(B)$
is a Kleinian group if $\Omega(\Gamma)\neq \emptyset$. 
Historically, a Kleinian group is defined as a subgroup of orientation preserving isometries of $B$, but we need the extended definition in order to apply Brook's deformation theorem.  Also we follow the definition in \cite{KapovichHyp} where it is shown that many classical theorems for orientation preserving Kleinian groups extend to the general case without many difficulties. 

Define $S(\Gamma)=\Omega(\Gamma)/\Gamma$ 
and $M(\Gamma)=(B\sqcup \Omega(\Gamma))/\Gamma$ and note that $S(\Gamma)=\partial M(\Gamma)$.

Both spaces $S(\Gamma)$ and $M(\Gamma)$ can be endowed with a hyperbolic orbifold structure.  For an orbifold $O$, let $|O|$ be the underlying space of $O$. When $\Gamma$ contains orientation reversing elements, one has to be cautious with the fact that  $|S(\Gamma)|$ is a proper subset of $\partial |M(\Gamma)|$. The points in 
 $\partial |M(\Gamma)| \setminus |S(\Gamma)|$ are interior points in the orbifold structure contained in the singular locus. In other words, neighborhoods of  these points are modeled by the quotient of a ball by the action of a finite group of isometries of $B$. The simplest example to have in mind is the space $X$ which is the quotient of $\C$ by the map $z\mapsto \overline{z}$. Then $X$ admits the structure of a manifold with 
boundary homeomorphic to the closure of the upper half-space.  Alternatively, $X$ also possess the structure of an orbifold without boundary where the real line is the singular locus of the orbifold.

\begin{definition}
A 3-manifold $M$ is called geometrically finite if there exists a compact submanifold with boundary $M_0$ such that $M\setminus M_0$ is a disjoint union of finitely many pieces $V_i$ satisfying
either that
\begin{itemize}
 \item $V_i$ is homeomorphic to $\mathbb{S}^1\times (\mathbb{D}\setminus \{0\})$, where $\mathbb{D}$ is the open unit disk in $\C$ and $\mathbb{S}^1$ is the unit circle in $\C$; or
 \item $V_i$ is homeomorphic to $[0,1] \times (\mathbb{D}\setminus \{0\})$ so that the 
 punctured disks $\{0\}\times (\mathbb{D}\setminus \{0\})$ and $\{1\}\times (\mathbb{D}\setminus \{0\})$ belong to $\partial M.$ 
 \end{itemize}
 
 A Kleinian group $\Gamma$ is called geometrically finite if and only if contains a finite index subgroup $\Gamma_0$ such that $M(\Gamma_0)$  is geometrically finite. 
\end{definition}

The pieces $V_i$ in the definition are usually known as solid cusp torii and solid pairing tubes respectively.
There are many equivalent definitions of geometrically finite Kleinian groups, see for example \cite{KapovichHyp} and \cite{MardenBook}.

\subsection{Pinching}\label{subsectionpinching} In what follows we describe the pinching procedure for a finite family of disjoint simple closed geodesics on a Riemann surface $S$ contained in $S(\Gamma)$ for a geometrically finite Fuchsian group $\Gamma< PSL(2,\C)$, according to theorems of B. Maskit \cite{MaskitKleinII} and K. Ohshika \cite{OhshikaGeoFin}, see also the pinching theorem of Section 5.15 in  \cite{MardenBook}.

For $r>0$ let $A_r=\{z\frac{1}{r}<z<r\}$ be the round symmetric annulus and define $F(z)=z|z|$ in $A_r$, note that $F(z)$ is quasiconformal. Take the sequence $\mu_n$ of Beltrami differentials on $A_r$  defined by $\mu_n=\frac{\overline{\partial}F^n}{\partial F^n}$ 
where $F^n$ is the $n$-th iterated of $F$ and the partial derivatives are taken in the sense of distributions. Then $\|\mu_n\|\rightarrow 1$ as $n\rightarrow \infty.$

Let  $l_i$ be a finite collection of disjoint simple closed 
geodesics in $S$, then by the collar lemma there exists $r_0$ and a family of conformal embeddings  $h_i:A_{r_0}\rightarrow S(\Gamma)$ with $h_i(\mathbb{S}^1)=l_i$ and the closed sets $\overline{h(A_{r_0})}$ are mutually disjoint. By taking the simultaneous push-forward of $\mu_n$ by the maps $h_i$, we obtain a sequence $\tilde{\nu}_n$ of Beltrami differentials on $S$ supported on the union of the annular neighborhoods $h_i(A_{r_0})$. Now lift the sequence $\tilde{\nu}_n$ over $\Omega(\Gamma)$ by the natural projection $\Omega(\Gamma)\rightarrow S(\Gamma)$ to get a sequence  $\nu_n$ of Beltrami differentials in 
$\Omega(\Gamma)$ with $\|\nu_n\|\rightarrow 1.$ If $f_n$ is a solution of the Beltrami equation with coefficient $\nu_n$ then the
group $\Gamma_n=f_n\circ \Gamma \circ f_n^{-1}$ in $PSL(2,\C)$  is quasifucshian. In case that $\Gamma$ acts on $\mathbb{D}$ and  $S=\mathbb{D}/\Gamma$ then all the maps $f_n$ are holomorphic outside $\overline{\mathbb{D}}.$

 Then the following theorem is true.
 
 \begin{theorem}\label{th.pinching}
  Let $\Gamma_n$ be a family of quasifuchsian groups 
  as constructed above. After taken a suitable subsequence there exists a geometrically finite  Kleinian group $\Gamma_\infty=\lim \Gamma_{n_k}$  in the topology of convergence on generators so that  
  
  \begin{itemize}
   \item $\Gamma_\infty\simeq \Gamma.$
   \item The interior of $M(\Gamma_\infty)$ is homeomorphic to the interior of $M(\Gamma).$
   \item The surface $S(\Gamma_\infty)$ is homeomorphic to $S(\Gamma)\setminus (\bigcup l_i).$ Even more, the homeomorphism can be chosen to be holomorphic outside $\bigcup h_i(A_{r_0})$ and each $l_i$ determines a pair of punctures in $S(\Gamma_\infty),$
   
    \end{itemize}
 \end{theorem}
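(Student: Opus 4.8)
The plan is to produce $\Gamma_\infty$ as an algebraic limit of the normalized representations $\rho_n(\gamma)=f_n\circ\gamma\circ f_n^{-1}$ and then to recognize it, via the pinching theorems of Maskit \cite{MaskitKleinII} and Ohshika \cite{OhshikaGeoFin} (equivalently Section 5.15 of \cite{MardenBook}), as the geometrically finite group obtained by turning the curves $l_i$ into cusps. Since $\Gamma$ is Fuchsian I may assume it acts on $\mathbb{D}$ with $S=\mathbb{D}/\Gamma$, and I use the remark preceding the statement: because $\nu_n$ is supported on the lift of $\bigcup h_i(A_{r_0})\subset S$, each $f_n$ is conformal on the \emph{frozen side} $D^{*}:=\overline{\C}\setminus\overline{\mathbb{D}}$. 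Post-composing with M\"obius maps I normalize $f_n$ to fix three points of $D^{*}$, so that $\{f_n|_{D^{*}}\}$ is a normal family of univalent maps.

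First I would extract algebraic convergence. After passing to a subsequence, $f_n|_{D^{*}}$ converges locally uniformly to a univalent map $f_\infty$ (the normalization rules out a constant limit). For each generator $\gamma$ of $\Gamma$, $\gamma$ preserves $D^{*}$, so $\rho_n(\gamma)$ is the M\"obius transformation carrying $f_n(D^{*})$ to itself and is determined by its values at three points; these converge, hence $\rho_n(\gamma)\to\rho_\infty(\gamma):=f_\infty\circ\gamma\circ f_\infty^{-1}$ on $f_\infty(D^{*})$, extended to a M\"obius map of $\overline{\C}$. This defines a homomorphism $\rho_\infty:\Gamma\to PSL(2,\C)$ and gives convergence on generators. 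By Chuckrow's theorem (via the J\o rgensen inequality), an algebraic limit of discrete faithful non-elementary representations of a finitely generated group is again discrete and faithful; thus $\Gamma_\infty:=\rho_\infty(\Gamma)$ is a Kleinian group, its domain of discontinuity containing $f_\infty(D^{*})$, and $\rho_\infty$ is an isomorphism. This is the first bullet $\Gamma_\infty\simeq\Gamma$.

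Next I would identify what degenerates. Let $\gamma_i\in\Gamma$ be the holonomy of $l_i$. In the $h_i$-coordinate the map $f_n\circ h_i$ and the model $F^{n}$ carry the same Beltrami coefficient $\mu_n$ on $A_{r_0}$, so $f_n(h_i(A_{r_0}))$ is a conformal copy of $F^{n}(A_{r_0})$; since $F^{n}(z)=z|z|^{2^{n}-1}$ sends $A_{r_0}$ onto an annulus of modulus $2^{n}\log r_0/\pi\to\infty$, the image collar has modulus tending to infinity. By the collar/length estimate the core geodesic length of the image of $l_i$ tends to $0$, so the complex translation length of $\rho_n(\gamma_i)$ tends to $0$; therefore $\mathrm{tr}^2\rho_\infty(\gamma_i)=4$ and each $\rho_\infty(\gamma_i)$ is parabolic. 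Thus the $l_i$ become \emph{accidental parabolics}, finite in number and subordinate to the disjoint geodesics $l_i$, while outside $\bigcup h_i(A_{r_0})$ the maps $f_n$ are conformal and on $D^{*}$ one boundary side stays frozen. This is exactly the hypothesis of the pinching theorems of \cite{MaskitKleinII} and \cite{OhshikaGeoFin}.

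Finally I would invoke those theorems to conclude that $\Gamma_\infty$ is geometrically finite, that the algebraic limit coincides with the geometric limit, and that the conformal boundary is $S(\Gamma_\infty)\cong S(\Gamma)\setminus\bigcup l_i$ with each $l_i$ opening into a pair of punctures; the identification is holomorphic off $\bigcup h_i(A_{r_0})$ because $f_\infty$ is conformal there, and the interior homeomorphism $B/\Gamma_\infty\cong B/\Gamma$ then follows from Marden's isomorphism theorem, pinching converting boundary curves to cusps without altering the interior topology. \textbf{The main obstacle} is precisely the geometric finiteness of the algebraic limit together with the claim that the algebraic and geometric limits agree: a priori an algebraic limit of quasifuchsian groups may acquire degenerate ends or extra parabolics (points of the Bers boundary). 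What rescues the argument is the tight control on $\nu_n$—its support is confined to the fixed collars $h_i(A_{r_0})$ and one boundary component is frozen—so that the only degeneration is the pinching of the $l_i$; making this rigorous is exactly what is established in \cite{MaskitKleinII,OhshikaGeoFin,MardenBook}.
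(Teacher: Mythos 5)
Your proposal is correct and is essentially the paper's own approach: the paper gives no independent proof of Theorem \ref{th.pinching}, but attributes it to Maskit's pinching theorem (for functional groups, with the pinched curves having loxodromic representatives in distinct conjugacy classes), as extended by Ohshika to all geometrically finite groups and following Marden's exposition in Section 5.15 --- precisely the results you invoke for the hard steps (geometric finiteness of the limit, agreement of algebraic and geometric limits, and the boundary identification). Your additional scaffolding --- normalizing $f_n$ on the frozen side $D^*$ to extract a univalent limit $f_\infty$, obtaining $\Gamma_\infty\simeq\Gamma$ via Chuckrow/J{\o}rgensen, and the modulus computation $2^n\log r_0/\pi\to\infty$ forcing the holonomies of the $l_i$ to become parabolic --- is sound, consistent with how those sources proceed, and you correctly isolate as the genuine content of the cited theorems exactly what cannot be done by soft arguments (ruling out degenerate ends and extra accidental parabolics in the limit).
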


 The previous theorem is proved by Maskit  for functional groups, with the condition that the closed simple curves $l \in S(\Gamma)$  to be pinched must
 have loxodromic representatives in the group, which represent different conjugacy classes. Last condition  always holds for simple closed geodesics which belong to the same connected component of $S(\Gamma)$ for a given quasifuchsian group $\Gamma$. In \cite{OhshikaGeoFin},  Ohshika extends the theorem of Maskit to all geometrically finite groups. Our version follows the exposition of Marden in the pinching theorem of Section 5.15 of \cite{MardenBook}.\\

\textbf{Klein-Maskit combination theorem.} We will need the following theorem in the proof of Theorem \ref{TheoremA}. See \cite{MaskitKleinII}.

\begin{theorem}\label{th.KleinMaskit}[Klein-Maskit's combination Theorem I 
\cite{MaskitBook}.] For i=1,2, let ${\Gamma}_{i}$ be a Kleinian group with region of 
discontinuity ${\Omega}_{i}$  and a fundamental region $F_{i}$. Assume that there is a simple closed loop ${\gamma}$ contained in the interior of $F_{1}\bigcap F_{2}$, 
bounding two complemented discs $D_1$ and $D_2$ with $\overline{D}_i\subset F_{i}$. 
Then ${\Gamma}=\langle {\Gamma}_{1}, 
{\Gamma}_{2}\rangle$ is a Kleinian group, such that:
\begin{enumerate}
\item The group $\Gamma$ is isomorphic to the free product $\Gamma_1\ast 
\Gamma_2$. If ${\Gamma}_{1}$ and ${\Gamma}_{2}$ are 
geometrically finite, then ${\Gamma}$ is so. 

\item Let $S_i=K_i/Stab(K_i)$ be surfaces where $K_i\subset \Omega(\Gamma_i)$ are 
the components containing $\gamma$ and $Stab(K_i)< \Gamma_i$ are their 
respective stabilizers. Then $S(\Gamma)$ is homeomorphic to
$$(S(\Gamma_1) \setminus S_1) \sqcup 
(S(\Gamma_2)\setminus S_2)\sqcup (S_1\#S_2)$$ where $S_1\#S_2$ is the connected sum of the surfaces $S_1$ and $S_2$ along the respective projections of 
$D_1$ and $D_2.$

Even more, this homeomorphism can be chosen holomorphic on $S(\Gamma)\setminus S_1 \# S_2$.
\item The manifold  $M(\Gamma)$ is homeomorphic to the disk sum $M(\Gamma_1)$ with $M(\Gamma_2)$ induced by  the disks determining the 
connected sum $S_1\# S_2.$
\end{enumerate}
\end{theorem}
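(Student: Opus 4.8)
The plan is to prove the theorem by the classical \emph{ping-pong} (Klein criterion) argument for the free-product structure, together with Poincar\'e's polyhedron theorem for discreteness, for a fundamental domain, and for geometric finiteness; the topological conclusions are then read off from the resulting tiling. The configuration that drives everything is the following interactive-pair property. Since $\gamma\subset\mathrm{int}(F_1\cap F_2)$ bounds the complementary discs $D_1,D_2$ with $\overline{D}_i\subset F_i$, we have $\overline{D}_1\subset\mathrm{int}(F_1)$ and $\overline{D}_2\subset\mathrm{int}(F_2)$; because distinct $\Gamma_i$-translates of $\mathrm{int}(F_i)$ are disjoint, every nontrivial $g\in\Gamma_1$ satisfies $g(\overline{D}_1)\subset D_2$ and every nontrivial $h\in\Gamma_2$ satisfies $h(\overline{D}_2)\subset D_1$. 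Passing to the Poincar\'e extension, the hyperbolic plane $\Pi\subset B$ with $\partial\Pi=\gamma$ splits $B$ into half-balls $B_1,B_2$ over $D_1,D_2$, and the same estimates hold in $B$: $g(\overline{B}_1)\subset B_2$ and $h(\overline{B}_2)\subset B_1$.

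First I would run the table-tennis lemma. For a reduced alternating word $w=s_1\cdots s_n$ with consecutive letters in different factors, I track the image of a point $p\in\gamma$: choosing the starting disc according to the rightmost letter and applying the letters one at a time, the estimates above force $w(p)$ into one of the open discs $D_1$ or $D_2$, hence $w(p)\notin\gamma$ and $w\neq 1$. This gives the isomorphism $\Gamma\cong\Gamma_1\ast\Gamma_2$ of part (1). The same estimates in $B$ form an interactive pair, which implies that $\Gamma$ is discrete and that a neighborhood of $\gamma$ lies in $\Omega(\Gamma)$, so $\Omega(\Gamma)\neq\emptyset$ and $\Gamma$ is Kleinian.

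Next I would identify a fundamental domain and obtain geometric finiteness. Let $P_i\subset B$ be a fundamental polyhedron for $\Gamma_i$ chosen so that $\overline{B}_i\subset P_i$, and set $P=P_1\cap P_2$. I would verify the hypotheses of Poincar\'e's polyhedron theorem for $P$: its side-pairings are exactly those of $P_1$ together with those of $P_2$, and since $\Pi$ separates the two families and each factor keeps its paired sides on its own side of $\Pi$, every edge cycle of $P$ is already an edge cycle internal to $P_1$ or to $P_2$. Hence no new relations arise, Poincar\'e's theorem confirms that $\Gamma$ is discrete with fundamental domain $P$ and presentation $\Gamma_1\ast\Gamma_2$, and when $\Gamma_1,\Gamma_2$ are geometrically finite the polyhedra $P_i$ may be taken finite-sided, so $P=P_1\cap P_2$ is finite-sided and $\Gamma$ is geometrically finite, finishing part (1).

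Finally I would read off the topology from the tiling $\Omega(\Gamma)=\bigcup_{w\in\Gamma}w(F)$ with $F=F_1\cap F_2$. The projection of $\gamma$ is the circle along which the surgery takes place: away from the components $K_i$ meeting $\gamma$, the tiles reproduce $S(\Gamma_1)\setminus S_1$ and $S(\Gamma_2)\setminus S_2$, and there the identification is induced by the inclusions $F\hookrightarrow F_i$, hence holomorphic; the two sides glued across $\gamma$ produce the connected sum $S_1\#S_2$, which is part (2). For part (3), $\Pi$ descends to a properly embedded disc in $M(\Gamma)$, and cutting $P=P_1\cap P_2$ along $\Pi$ exhibits $M(\Gamma)$ as $M(\Gamma_1)$ and $M(\Gamma_2)$ glued along disc neighborhoods of $\Pi$, i.e.\ the disk sum determined by $D_1,D_2$. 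The main obstacle is the middle step: verifying the completeness and edge-cycle conditions of Poincar\'e's polyhedron theorem for $P_1\cap P_2$ so that discreteness, the absence of extra relations, and geometric finiteness all follow uniformly; it is precisely the separating role of $\Pi$ (equivalently, the interactive-pair estimates) that makes this verification succeed.
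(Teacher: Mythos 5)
The paper does not prove this statement at all: it is quoted as background, with the proof deferred to Maskit's book \cite{MaskitBook}, so the only meaningful comparison is with the standard proof you are implicitly reconstructing --- and your proposal is indeed that classical route (Klein's ping-pong criterion plus a fundamental-domain/Poincar\'e argument), not a new one. The core of your sketch is sound, and the key observation is exactly right: from $\overline{D}_i\subset F_i$ and $\gamma\subset \mathrm{int}(F_1\cap F_2)$ one gets $\overline{D}_i\subset \mathrm{int}(F_i)$, and disjointness of distinct translates of $\mathrm{int}(F_i)$ forces $g(\overline{D}_1)\subset D_2$ for $g\in\Gamma_1\setminus\{1\}$ and $h(\overline{D}_2)\subset D_1$ for $h\in\Gamma_2\setminus\{1\}$; tracking a point of $\gamma$ through a reduced word then kills every nontrivial relation and gives $\Gamma\cong\Gamma_1\ast\Gamma_2$. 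Your lift of these estimates to the half-balls $B_1,B_2$ over $D_1,D_2$ is also correct, since $g(\overline{D}_1)\subset D_2$ implies containment of the corresponding half-balls.

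Two steps deserve more care than you give them. First, discreteness: ``$w(p)\notin\gamma$ for $p\in\gamma$'' is not enough, because points of $D_1\cup D_2$ accumulate on $\gamma$; the clean fix is to strengthen the same tracking argument to show $w\bigl(\mathrm{int}(F_1\cap F_2)\bigr)\cap \mathrm{int}(F_1\cap F_2)=\emptyset$ for every nontrivial reduced $w$ (using that the complement of $\mathrm{int}(F_i)$ lies in $D_{3-i}$, the final letter throws the point off $\mathrm{int}(F_i)$, hence off $\mathrm{int}(F_1\cap F_2)$). This exhibits a nonempty wandering open set, which yields discreteness, $\Omega(\Gamma)\neq\emptyset$, and the fundamental region $F_1\cap F_2$ in one stroke --- this is Klein's original argument, and it makes the Poincar\'e step unnecessary for part (1) except for geometric finiteness. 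Second, and this is the genuine gap you yourself flag: the existence of a \emph{finite-sided} fundamental polyhedron $P_i$ with $\overline{B}_i\subset P_i$ is asserted, not proved, and it is not automatic --- a Dirichlet polyhedron centered at a point deep in $B_i$ need not contain $\overline{B}_i$, since a translate of the center can land just across the plane $\Pi$ near a point of $B_i$ close to $\Pi$. One must instead modify a given finite-sided polyhedron $Q_i$ by replacing the $\Gamma_i$-translates of $\overline{B}_i$ that meet it with $\overline{B}_i$ itself, and then verify that only finitely many translates are involved (using that $\overline{D}_i\subset\Omega(\Gamma_i)$, so these translates accumulate only at the limit set). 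That verification, together with the edge-cycle check for $P_1\cap P_2$, is precisely where the real content of Maskit's interactive-pair machinery lives; your outline is correct, but at this point it is a program rather than a proof.
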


\textbf{Disks patterns and Brook's deformation theorem.} The following construction is needed in the proof  of Theorem \ref{TheoremB}.

 \begin{definition} Let $\Gamma$ be a geometrically finite
 torsion free Kleinian group. Then a collection $K$ of closed sets $K_i\subset S(\Gamma)$
 is called a \textbf{round disk collection} if and only if the set $K$ consists of finitely many elements 
 and every element $K_i$  is either a homeomorphic projection of a compact round disk 
  $D\subset \Omega(\Gamma)$ to $S(\Gamma)$ or is a closed punctured disk in $S(\Gamma)$ where the puncture corresponds to a cusp of $S(\Gamma)$ and $K_i$ is covered by a round disk 
  $D\setminus \{p\}\subset \Omega(\Gamma)$, where $D$ is precisely
  invariant under its parabolic stabilizer $\gamma \in \Gamma$ and $p$ is the fixed
  point of $\gamma$ with $p\in \partial D.$
 \end{definition}
 
\begin{definition}
 A finite round disk collection $K\subset S(\Gamma)$ is called a \textbf{pattern of round disks} if and 
 only if the following holds:
 \begin{enumerate}
  \item No point in $S(\Gamma)$ is covered by the interior of more than two disks in $K.$
  \item Given two different disks $K_i$ and $K_j$, then either the interiors of $K_i$ and $K_j$ are disjoint or 
  their boundaries are orthogonal.
 \end{enumerate}
\end{definition}

 Each disk $K_i\subset K$ is covered by a round disk $D_i\in \overline{\C}$.  If $C(D_i)$
 is the convex hull of $\partial D_i$ in $B$ with respect to the Poincar\'e metric, then $C(D_i)$ is 
 invariant under the stabilizer of $D_i$ in $\Gamma.$ Let $V(D_i)$ be the component of $\overline{B}\setminus C(D_i)$ containing $D_i$. Let $Y(K_i)\subset M(\Gamma)$ be the projection of $V(D_i)$ in $M(\Gamma)$ and $$M_K=M(\Gamma)\setminus \bigcup_{K_i\in K} 
 Y(K_i),$$ then on the manifold $M_K$ there exists a 
 natural polyhedral geometric structure $\mathcal{G}$ which on the interior of  
 $M_K$ coincides with the hyperbolic structure of $int(M(\Gamma))$, on $M(\Gamma)\cap (\bigcup\partial Y(K_i))$ the structure 
 $\mathcal{G}$ is a polyhedral piecewise 
 geodesic structure, on the remaining boundary components of $M_K$ the structure coincides with the 
 M\"obius structure inherited from $M(\Gamma)$. 
 
 Let $\tilde{K}$ be the collection of all round disks in $\Omega (\Gamma)$ which 
 cover all $K_i\subset K$. Let $\Gamma_K<Isom(B)$ be the group 
 generated by $\Gamma$ and the reflections with respect to the circles $\partial D$
 for $D\in \tilde{K}. $ Then Theorem 13.1 in \cite{KapovichHyp} states as follows.
 
 \begin{theorem}\label{th.Theorem6} The group $\Gamma_K$ is geometrically finite and 
 $M(\Gamma_K)$ is an orbifold  diffeomorphic to $M_K$ equipped with the structure
$\mathcal{G}$.
\end{theorem}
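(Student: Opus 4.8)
The plan is to realize $\Gamma_K$ as a group generated by the side-pairings of an explicit fundamental polyhedron and to verify discreteness and geometric finiteness through Poincar\'e's fundamental polyhedron theorem. First I would fix a convex fundamental polyhedron $\Phi$ for the action of $\Gamma$ on $B$; since $\Gamma$ is geometrically finite and torsion free, such a $\Phi$ (for instance a Dirichlet domain centered at a generic point) has finitely many faces and its side-pairings generate $\Gamma$. Each round disk $D\in\tilde{K}$ bounds a totally geodesic plane $P_D=C(D)$ in $B$, and I write $r_D$ for the reflection of $B$ across $P_D$; this is an orientation-reversing isometry fixing $P_D$ pointwise with $r_D(\partial D)=\partial D$. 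Because $\tilde{K}$ is by construction the full $\Gamma$-orbit of the finitely many lifts of the $K_i$, the family $\{P_D\}_{D\in\tilde K}$ is $\Gamma$-invariant and locally finite in $B$, and each $P_D$ is precisely invariant under the stabilizer of $D$ in $\Gamma$.

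Next I would build the candidate fundamental domain by cutting $\Phi$ along the planes $P_D$ and discarding the half-spaces facing the disks, namely
$\Phi_K=\Phi\cap\bigcap_{D\in\tilde K}\bigl(\overline{B}\setminus V(D)\bigr)$.
The faces of $\Phi_K$ split into two types: the surviving faces of $\Phi$, which remain paired by the original elements of $\Gamma$, and the new \emph{reflector faces} lying on the planes $P_D$, each stabilized by the reflection $r_D$. The projection of $\Phi_K$ into $M(\Gamma)$ is exactly $M_K=M(\Gamma)\setminus\bigcup_{K_i\in K}Y(K_i)$, since $Y(K_i)$ is the image of $V(D_i)$, and the polyhedral structure carried by $\Phi_K$ (hyperbolic in the interior, piecewise geodesic along the reflector faces, M\"obius on the remaining faces) descends to precisely the structure $\mathcal{G}$.

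I would then check the hypotheses of Poincar\'e's theorem. The decisive input is the pattern condition: two disks either have disjoint interiors or meet orthogonally. Orthogonality guarantees that whenever two reflector planes $P_D$ and $P_{D'}$ meet along an edge of $\Phi_K$, the dihedral angle between them is exactly $\pi/2$, and the precise invariance of each disk under its stabilizer prevents any plane from being identified with one it crosses non-orthogonally; together these force every edge cycle to close up with total angle of the form $2\pi/k$, so the cycle (ridge) condition holds. The side-pairing transformations are the $\Gamma$-pairings of the surviving faces together with the reflections $r_D$ on the reflector faces, and I would confirm their consistency along ridges using the right-angle condition to control cycles that mix a $\Gamma$-face with a reflector face. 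Poincar\'e's polyhedron theorem then yields that $\Gamma_K=\langle\Gamma,\{r_D\}\rangle$ is discrete with fundamental domain $\Phi_K$; when the disks are pairwise disjoint the discreteness can alternatively be obtained by iterating the Klein--Maskit combination Theorem~\ref{th.KleinMaskit}. Since $\Phi$ has finitely many faces and $K$ is finite, $\Phi_K$ has finitely many faces and finitely many cusps, whence $\Gamma_K$ is geometrically finite, and $(B\sqcup\Omega(\Gamma_K))/\Gamma_K$ is obtained from $M(\Gamma)$ by deleting the $Y(K_i)$ and folding the reflector walls into the singular locus, giving an orbifold diffeomorphic to $M_K$ with the structure $\mathcal{G}$.

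The main obstacle will be the edge-cycle and geometric-finiteness bookkeeping in the cusped case. When a disk $K_i$ is a punctured disk invariant under a parabolic stabilizer $\gamma\in\Gamma$, the plane $P_{D_i}$ is tangent to $\partial B$ at the fixed point of $\gamma$, so the reflector wall runs out to a cusp rather than to an interior edge. There I must verify that $r_{D_i}$ commutes appropriately with the parabolic, that no accidental parabolics are created, and that the truncated cusp remains a genuine rank-one or rank-two cusp of $\Gamma_K$ of finite type, so that geometric finiteness is preserved. The orthogonality and precise-invariance requirements built into the definition of a pattern of round disks are exactly the hypotheses that make these cusp cycles close correctly, and controlling them carefully is the delicate part of the argument.
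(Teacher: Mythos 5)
The first thing to note is that the paper does not prove Theorem \ref{th.Theorem6} at all: it is imported background, quoted verbatim as Theorem 13.1 of \cite{KapovichHyp} from the circle of ideas around Brooks's orbifold deformation theorem, so there is no in-paper argument to compare against. Your Poincar\'e-polyhedron strategy --- cut a finitely-sided fundamental polyhedron $\Phi$ for $\Gamma$ along the $\Gamma$-invariant family of geodesic planes $C(D)$, $D\in\tilde K$, retain the old $\Gamma$-pairings on the surviving faces, install the reflections $r_D$ on the reflector faces, and use the orthogonality clause in the definition of a pattern to close reflector--reflector edge cycles at angle $\pi/2$ --- is the standard route to statements of this type, and your description of the quotient (reflector walls folding into mirror singular locus, the points of $\partial Y(K_i)\setminus K_i$ becoming interior orbifold points) agrees exactly with the paper's remarks following the theorem.

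That said, what you have written is a plan rather than a proof, and the steps you defer are exactly where the content lies. First, finite-sidedness of $\Phi_K$ does not follow from local finiteness of the planes in $B$ alone: $\Phi$ is noncompact, and you must exclude infinitely many planes entering $\Phi$ through its ideal boundary; for the compact disks this needs proper discontinuity of $\Gamma$ on $B\cup\Omega(\Gamma)$ applied to the compact sets $C(D)\cup \overline{D}$, while for the punctured disks, whose planes are tangent to $\partial B$ at parabolic fixed points, it needs a separate horoball argument. Second, at those tangency points Poincar\'e's theorem requires not only the edge-angle conditions but the completeness (parabolic cycle) condition at the ideal vertices: one must check that $r_D$ together with the parabolic stabilizer of $D$ generates a discrete cusp group and that the induced similarity structure on the cusp cross-section of $\Phi_K$ is complete. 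You flag this as ``the delicate part'' and do not carry it out, yet geometric finiteness --- the substance of the theorem --- is decided precisely there. Third, a correction to your aside: Theorem \ref{th.KleinMaskit} as stated (free-product Combination I) can handle a $\Gamma$-orbit of pairwise disjoint compact disks with trivial stabilizer (one combination per orbit, since $\gamma r_D\gamma^{-1}=r_{\gamma D}$), but the punctured disks have nontrivial parabolic stabilizers and require the second, amalgamated/HNN combination theorem rather than Theorem \ref{th.KleinMaskit}. None of these are wrong turns --- the skeleton is the right one, and with points (i) and (ii) executed it would constitute a legitimate independent proof of the cited result --- but as it stands the proposal does not yet establish the theorem.
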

 
 \noindent \textbf{Remarks}: 
 \begin{enumerate}
 \item In particular, if a component $\Omega_0\subset \Omega(\Gamma)$ does not 
 intersect $\tilde{K}$ then the stabilizer of $\Omega_0$ in $\Gamma_K$ coincides with 
 stabilizer of $\Omega_0$ in $\Gamma.$ Therefore, if a disk pattern $K$ completely covers exactly one component $S\subset \partial M(\Gamma)$, then $\partial(M(\Gamma_K))$ is a $2$-dimensional orbifold which is conformally equivalent to $\partial M(\Gamma)\setminus S.$
 
 \item The charts around points on 
 $\partial(Y(K_i))$ are modeled 
 with the quotient of the unit three dimensional ball by a finite group, this group is generated 
 by reflections on planes passing through the origin. In particular, in this structure the points in $\partial Y(K_i) 
 \setminus K_i$ are interior points of $M_K$ equipped with the structure $\mathcal{G}.$

 \end{enumerate}

 The following simple example shows how this procedure works. Let $S$ be any Riemann surface and $\Gamma$ be a Fuchsian group uniformizing $S.$ Then $M(\Gamma)$ is a hyperbolic manifold homeomorphic to $S\times [0,1]$, the boundary of $M(\Gamma)$  consists of $S$ and an anticonformal copy of $S$. Let $\tau$ be the reflection with respect to the unit circle, then $\tau$ commutes with $\Gamma$. Let $G=\langle \Gamma, \tau \rangle$. Thus $G$ is a Kleinian group and $M(G)$ is a non orientable 
 hyperbolic orbifold, so that $\partial M(G)$ is conformally equivalent to $S.$
 The underlying space of the orbifold $M(G)$ is 
 a manifold which still is homeomorphic to $S\times [0,1]$ but now, one of the components consists of interior points of the orbifold structure on  $M(G).$
 
The following theorem justifies the existence of pattern of disks for a quasiconformal deformation of a given geometrically finite group. This theorem is part of the proof of Brooks's orbifold 
 deformation theorem. More precisely, see the steps 1 to 4 in the proof presented 
 in Section 13.5 of \cite{KapovichHyp}.  The statement is as follows.
 
 \begin{theorem}\label{th.geofintor}
For any torsion free geometrically finite Kleinian group $\Gamma$ there exists a quasiconformal homeomorphism 
 $h$ such that the group $\Gamma_h=h\circ \Gamma\circ h^{-1}$ is so that 
 $S(\Gamma_h)$ admits a pattern $K$ which completely covers   $S(\Gamma_h).$
 \end{theorem}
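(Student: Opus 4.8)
The plan is to build the deformation and the covering pattern one boundary component at a time and then assemble them equivariantly. Fix a component $S_0$ of $S(\Gamma)$; since $\Gamma$ is geometrically finite and torsion free, $S_0$ is a hyperbolic surface of finite type, say of genus $g$ with finitely many punctures coming from the cusps of $\Gamma$ meeting $S_0$. First I would choose a triangulation $T$ of $S_0$ that is ideal at the punctures, that is, whose vertices at the cusps are the parabolic points and whose remaining vertices and edges triangulate the compact core; only the existence of such a $T$ is used, not any finer combinatorial choice. The target is a collection $K$ of round disks whose contact and orthogonality graph is read off from $T$: one disk per vertex of $T$, realizing $T$ as a tangent packing, together with one interstitial disk per face of $T$.

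The next step is to realize this combinatorial data geometrically. By the Koebe--Andreev--Thurston circle packing theorem there is a constant curvature conformal structure $X_T$ on $S_0$ carrying a packing by geodesic circles whose nerve is $T$; developing $X_T$ into $\overline{\C}$ sends these geodesic circles to genuine round circles, the parabolic vertices developing to horocyclic round disks tangent to the limit set at the corresponding fixed points. Now $X_T$ must be installed on the boundary of a quasiconformal deformation of $\Gamma$. Here I would invoke the deformation theory of Ahlfors--Bers, Marden and Sullivan: the quasiconformal deformation space of a geometrically finite $\Gamma$ is identified with the Teichm\"uller space of its conformal boundary $S(\Gamma)$, so, performing the construction on every component simultaneously, there is a $\Gamma$-invariant Beltrami differential on $\Omega(\Gamma)$, extended by zero on the limit set, whose normalized solution $h$ of the Beltrami equation conjugates $\Gamma$ to a group $\Gamma_h=h\Gamma h^{-1}$ with $S(\Gamma_h)$ conformally $X_T$ on each component. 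Quasiconformal conjugation preserves geometric finiteness and torsion freeness, so $\Gamma_h$ is again of the required type.

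Granting a round packing of $\Omega(\Gamma_h)$ realizing $T$, I would complete $K$ to a covering by adjoining, for each face of $T$, the unique circle orthogonal to the three mutually tangent packing circles bounding that face; this circle passes through the three tangency points, and the disk it bounds covers the corresponding curvilinear interstice. A direct computation, invariant under M\"obius maps and hence valid for every face, shows that such an interstitial disk meets each of the three incident packing disks orthogonally, that two interstitial disks of faces sharing an edge are externally tangent at the common tangency point and so have disjoint interiors, and that the packing disks have pairwise disjoint interiors. Consequently every point of $S_0$ lies in the interior of at most one packing disk and at most one interstitial disk, any two disks either have disjoint interiors or orthogonal boundaries, and the packing disks cover the complement of the interstices while the interstitial disks cover the interstices. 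Thus $K$ is a pattern of round disks that completely covers $S_0$. Near each puncture the parabolic vertex contributes a round disk precisely invariant under its parabolic stabilizer, which is exactly the punctured disk case of the definition and accounts for the cusp neighborhoods; taking the union over all boundary components yields a pattern $K$ completely covering $S(\Gamma_h)$, to which Theorem \ref{th.Theorem6} then applies.

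The main obstacle is the reconciliation hidden in the second step: the circle packing theorem produces circles that are round in the universal cover of $X_T$, whereas the disks of $K$ must descend from round disks in the planar domain $\Omega(\Gamma_h)$, and for a general geometrically finite $\Gamma$ the invariant components of $\Omega(\Gamma_h)$ are not unions of round disks (for a Schottky group, for instance, the invariant component is infinitely connected and is not a quasidisk). Matching these two notions of roundness while keeping the deformation quasiconformal and equivariant is precisely the technical heart of Brooks's deformation theorem. I would carry it out following steps $1$ to $4$ of the proof in Section $13.5$ of \cite{KapovichHyp}, where a combinatorial packing is promoted to an exact round packing of $\Omega(\Gamma_h)$ by a controlled quasiconformal correction; once this is in hand, the interstitial completion and the verification of the pattern axioms above finish the argument.
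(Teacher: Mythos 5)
Your proposal gets the two ends of the argument right: the interstitial completion is correct (the circle through the three tangency points of three mutually tangent round circles is orthogonal to all three, and the resulting collection does satisfy the pattern axioms of the paper), the cusp treatment via horocyclic disks precisely invariant under the parabolic stabilizers is the right boundary case, and you correctly locate the difficulty. But the middle step --- where the theorem actually lives --- fails as structured. Ahlfors--Bers--Marden--Sullivan deformation theory prescribes only the conformal structure of the quotient $S(\Gamma_h)$; it gives no control over how disks on the quotient lift into $\Omega(\Gamma_h)\subset\overline{\C}$. The intrinsic uniformization (your ``developing map'') of a component of $\Omega(\Gamma_h)$ is not a M\"obius map unless that component is already a round disk, so the Koebe--Andreev--Thurston circles of $X_T$, transported to $S(\Gamma_h)$, are in general not projections of round disks of $\overline{\C}$ --- exactly the obstacle you flag. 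The problem is that your repair misattributes to steps 1--4 of Section 13.5 of \cite{KapovichHyp} a capability they do not have: those steps never take a combinatorially prescribed nerve $T$ and ``promote'' it to an exact round packing. Brooks's mechanism runs in the opposite direction: one places genuinely round circles directly in $\Omega(\Gamma)$, equivariantly and in finite number modulo $\Gamma$ (so roundness is never an issue), leaving interstices that are curvilinear triangles and quadrilaterals; triangles are covered by the orthogonal disk through their tangency points, while a quadrilateral interstice can be completely filled by finitely many round disks if and only if a cross-ratio/continued-fraction parameter is rational; the quasiconformal deformation $h$ is manufactured precisely to make those finitely many parameters rational. The combinatorics of the pattern $K$ is an output of this process, not an input. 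Realizing a prescribed triangulation by round circles on the boundary of some quasiconformal deformate of $\Gamma$ is a strictly stronger statement than Theorem \ref{th.geofintor}, and nothing you cite provides it.

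Consequently the proposal is circular in substance: the only step that bridges intrinsic roundness to ambient roundness is an appeal to (indeed a strengthening of) the result being proved. For comparison, the paper offers no independent proof at all --- it states Theorem \ref{th.geofintor} as a known ingredient of Brooks's orbifold deformation theorem, with exactly the reference you invoke, and then only uses its conclusion together with Theorem \ref{th.Theorem6}. So the two defensible options are either to do the same (cite the result outright, in which case the triangulation/KAT/Ahlfors--Bers scaffolding serves no purpose and should be deleted), or to actually reproduce Brooks's argument: equivariant round configurations in $\Omega(\Gamma)$, finitely many interstice moduli modulo $\Gamma$, a small equivariant quasiconformal deformation making them rational, and the continued-fraction filling. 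As written, your proof does neither, and the gap is genuine.
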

 
 With this background we can proceed to prove our theorems.

\section{Proof of Theorem \ref{TheoremA}}\label{proofA}
 
 \subsection{Connected sums of single coverings}\label{subconnectedsums}

Now we reproduce a topological operation between  branched coverings 
which is a sort of ``connected sum" of coverings.   This 
operation consists of taking the connected  sum of the target surfaces and a pull-back with respect to the branched coverings. Note that there are different ways to make a pull-back. We choose the simplest as follows.

Start with two single finite degree branched coverings $R_1:S\rightarrow S'$ and 
$R_2:T\rightarrow T'.$ We  construct $R_0=R_1\# R_2$,  the connected sum 
of  branched covering maps $R_1$ and $R_2$, as  a branched covering  
between two surfaces $U$ and $W$ such that $deg(R_0)=deg(R_1)+deg(R_2)-1$ and
$W=T'\# S'$ is the connected sum with respect to topological disks $D_S\subset S'$ and
$D_T\subset T'$ not containing the branched points (critical values) of $R_1$ and $R_2$, respectively.\\ 

{\bf Topological construction of $U$}. 
Set $deg(R_1)=n$ and $deg(R_2)=m$. Let $S''=S'\setminus D_S$ and 
$T''=T'\setminus D_T$ and $h:\partial D_S \rightarrow \partial D_T$ be a gluing 
homeomorphism. Let  $S_0=R^{-1}_1(S'')\subset S$ and 
$T_0=R_2^{-1}(T'')\subset T.$
Let us fix the following system of curves and homeomorphism.
\begin{enumerate}
\item Take two components of the boundaries, $\alpha_0$
a component of $\partial S_0$ and  $\beta_0$ a component of $\partial T_0.$ Let  $\phi_0:\alpha_0 \rightarrow \beta_0$ be a homeomorphism such that 
$R_2\circ \phi_0=h\circ R_1$.

 \item If $\gamma$ is another component of either $\partial S_0$ or $\partial T_0$, and different from 
 $\alpha_0$ and $\beta_0$, then fix a homeomorphism $\phi_\gamma$ which is either $h\circ R_1|
 _\gamma$ or $h^{-1}\circ R_2|_\gamma$, depending on the case. 
 For $i=1,...,m-1$, let $\{S_i\}$ be $m-1$ copies of $S''$ and for $j=1,...,n-1$ let 
 $\{T_i\}$ be a family of $n-1$ copies of $T''$. Then $$U=S_0\sqcup T_0\sqcup 
 \{\sqcup S_i\}\sqcup \{\sqcup T_i\}/\sim$$ where the quotient is taken according to the 
 system of homeomorphisms. 
 More precisely, the homeomorphism $\phi_0$ identifies $\alpha_0$ with $\beta_0$, and 
 the map $\phi_\gamma$  identifies  the component $\gamma$, which is either in $\partial S_0$ or in $\partial T_0$, with the respective copies of $T''$ or $S''$. The identification is taken in such a way that 
 $U$ is a connected surface and there exists a branched covering $R_1\#R_2:U\rightarrow 
 W$ so that $R_1\# R_2|_{S_0}=R_1:S_0\rightarrow S''$ and 
 $R_1\# R_2|_{T_0}=R_2:T_0\rightarrow T''.$ The restriction of $R_1\# R_2$ on each one of the remaining copies, of either $T''$ or  $S''$,  is univalent.
\end{enumerate} 
 
 Then, we have that $genus(U)=m \times genus(S)+n \times genus(T)$. Moreover,
 punctures and holes satisfy the same equation as the genus.

If $R_1$ and $R_2$ are holomorphic branched coverings between Riemann surfaces,
then by taking  a conformal gluing in the  construction of $U$ we can assume that 
$R_1\#R_2:U\rightarrow W$ is a holomorphic branched covering. In other words, the 
Hurwitz class of a topological connected sum between surfaces contains a 
holomorphic branched covering. 

Hence  any branched covering of the Riemann sphere, in general position and degree $d$,  
can be presented as the connected sum of $d-1$ copies of $z^2.$

If  $R_1(z)=z^2$ and $R_{2}(z)=z^{3}$,
then $R_{1}\#R_{2}$ is a degree $4$ branched self-covering of the topological sphere.

The Figure \ref{manifolds} illustrates yet another example, in this case, it shows the connected
sum of rational maps in general position of degree $3$ and $4$ . 

\begin{figure}[ht]\centering
\scalebox{.65}{\input{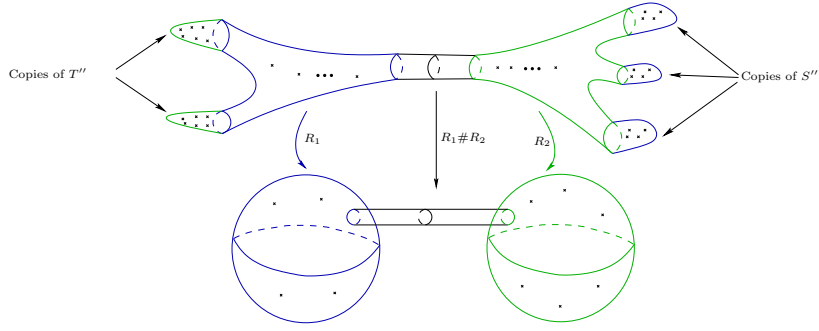}}
 \caption{The connected sum of rational maps $R_1$ and $R_2$, of degree $3$ and $4$ respectively. The dots depict the respective critical values below and their preimages above.}\label{manifolds}
 \end{figure}
 
 \subsection{Pinching.}

 We can recover the factors of the connected sum $R_1 \# R_2$  by a pinching procedure as follows.
 
 Let us note that if  $R_1\# R_2:S_1\rightarrow S_2$ is a connected sum of coverings with $deg(R_1),deg(R_2)\geq 2$, then $R_1\# R_2:U\rightarrow 
 W$ is a finite degree  covering between hyperbolic surfaces where $W=S_2\setminus CritVal(R_1\#R_2)$ and $U=S_1\setminus (R_1\#R_2)^{-1}(CritVal(R_1\#R_2))$. In other words, $R_1\#R_2$ always has a canonical representative whenever $deg(R_1), deg(R_2)\geq 2.$  Let $G, \Gamma_2$ be the Fuchsian 
 groups uniformizing  the surfaces $U$ and $W$ in the unit disk, respectively. Let 
 $\pi_U,\pi_W:\mathbb{D}  \rightarrow U,W$ be the respective uniformizing projections, 
 then there exists $\alpha $ a M\"obius automorphism of $\mathbb{D}$ satisfying  $R_1\# 
 R_2\circ\pi_U=\pi_W \circ \alpha$ and such that the subgroup $\Gamma_1=\alpha G \alpha^{-1} 
 < \Gamma_2$ has finite index. 
 
 We will say that the fixed pair of groups $\Gamma_1<\Gamma_2$ and the inclusion map uniformizes  $R_1\#R_2.$

On the other hand, the pair $\Gamma_1<\Gamma_2$ acts on $\mathbb{D}^*$ where  
$\mathbb{D}^{*}=\overline{\C}\setminus \overline{\mathbb{D}}$ and defines an orbifold covering map $Q:
U^*\rightarrow W^*$, where $U^*=\mathbb{D}^*/\Gamma_1$ and $W^*=\mathbb{D}^*/\Gamma_2$ are the 
anticonformal copies of $U$ and $W$ respectively, and such that $Q$ is an
anticonformal copy of $R_1\# R_2:U\rightarrow W.$

Let $C$ be the unique simple closed geodesic in the isotopy class of $[\alpha,
\beta]\subset W$. According to Theorem \ref{th.pinching}:

\begin{itemize}
\item There exists a sequence $f_k$  of  quasiconformal automorphisms of the Riemann sphere holomorphic outside the unit disk such that the groups 
$\Gamma_{2,k}=f_k\circ \Gamma_2\circ f_k^{-1}$ are  quasifuchsian groups converging 
to a Kleinian group $\Gamma_{2,\infty}$ with an invariant component of $\Omega(\Gamma_{2,\infty})$.
\item The surface $S(\Gamma_{2,\infty})$ is homeomorphic to $(W\setminus C) \sqcup W^*$ and $C$ determines a pair of punctures on $S(\Gamma_2,\infty).$ Moreover, 
the homeomorphism can be chosen conformal outside a tubular neighborhood of $C.$
\end{itemize}

In other words $S(\Gamma_{2,\infty})$ is conformally equivalent to 
$$(S'\setminus \{x\}) \sqcup (T' \setminus \{y\}) \sqcup W^*$$ where 
$x$ and $y$ are the additional cusps determined by $C.$  Indeed the perforations $x$ and
$y$ are known as \textit{accidental cusps} (accidental parabolics) which appear in pinching
processes. Let $\Gamma_{1,\infty}$ be the respective limit of the groups
$f_k\circ \Gamma_1 \circ f_{k}^{-1}$, then $\Gamma_{1,\infty}\cong \Gamma_1$. Since $(R_1\# R_2)^{-1}(C)$ consists of finitely many simple closed geodesics on $U$, then according to Theorem \ref{th.pinching} the surface
$S(\Gamma_{1,\infty})$ is topologically equivalent to
$(U\setminus (R_1\# R_2)^{-1}(C))\sqcup U^*$, 
these equivalences can be chosen conformal outside tubular neigborhoods of the curves in $(R_1\# R_2)^{-1}(C)$. 
In conclusion,
$\Gamma_{1,\infty}<\Gamma_{2,\infty}$ induces a covering map $H:
S(\Gamma_{1,\infty})\rightarrow S(\Gamma_{2,
\infty})$  so that the restriction of  $H$ to the component associated to $U^*$ is in the conformal Hurwitz class
of $Q$. Among the restrictions to the other components of $S(\Gamma_{1,
\infty})$ which are coverings there are only two which 
have degree bigger than one, the other restrictions are univalent. The pair of non-univalent coverings belongs to the conformal Hurwitz class of the coverings $R_1$ and $R_2.$ 

The result of pinching the manifolds of the example depicted in Figure \ref{manifolds} is 
shown in Figure \ref{pinched}.

\begin{figure}[h]\centering
 \input{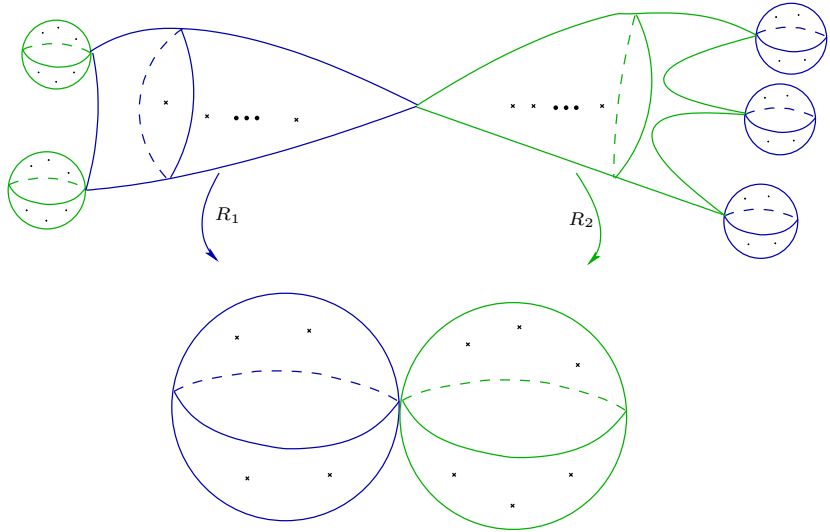}
 \caption{In this picture we applied pinching to the example in Figure \ref{manifolds}. }\label{pinched}
 
 \end{figure}

\subsection{Klein-Maskit combinations and the connected sums of coverings}
Now we are ready to prove Theorem \ref{TheoremA}.

\begin{proof}[Proof of Theorem \ref{TheoremA}] 
First, begin with two  single holomorphic covering maps $R_{1}:S \rightarrow S'$ and  
$R_{2}:T \rightarrow T'$  of degree $n$ and $m$ respectively, denote by 
${\Gamma}$ and ${\Gamma}'$ the respective Fuchsian uniformizing groups of $S$ 
and $S'$, and groups $G$ and $G'$ for the surfaces $T$ and $T'$ acting on the unit 
disk $\mathbb{D}$ so that $\Gamma<\Gamma'$ and $G<G'$ and the inclusion 
maps induces the covering $R_1$ and $R_2$, respectively. If 
$\mathbb{D}^*=\overline{\C}\setminus \overline{\mathbb{D}}$, then inclusion maps 
$(\Gamma,\mathbb{D}^*)\rightarrow (\Gamma',\mathbb{D}^*)$ and 
$(G,\mathbb{D}^*)\rightarrow (G',\mathbb{D}^*)$ define anticonformal copies 
of $R_1$ and $R_2$, respectively. Denote by $Q_1:S^*\rightarrow (S')^*$ and 
$Q_2:T^*\rightarrow (T')^*$ these anticonformal copies. We have that $[{\Gamma}':
{\Gamma}]=n$ and 
$[G':G]=m$, following the construction of previous section  we construct 
a covering map  of degree $n+m-1$, define $R_{0}:=Q_{1}{\#}Q_{2}$ thus $R_0$ maps $U$ to $W=(S')^*{\#}(T')^*$. \\

{\bf Group construction of $U$.} We  follow the topological construction above 
with the Klein-Maskit combination theorem. Let $\tau(z)=1/\overline{z}$ be the reflection and 
fix suitable round closed disks $D\subset \tau(F(\Gamma))$ and 
$\widetilde{D}\subset \tau(F(G))$, where $F(\Gamma)$ and $F(G)$ are fundamental regions for 
the actions of $\Gamma$ and $G$ in $\mathbb{D}$, respectively. Now let $h\in PSL(2,\C)$ be 
an element so that $h(\partial D)=\partial \widetilde{D}$ and $h$ maps the interior of $D$ onto the exterior of  $\widetilde{D}$. Then 
the pair of groups $\Gamma'$ and $h^{-1}\circ G'\circ h$ and the disks $D_1=D$ and 
$D_2=\overline{\C}\setminus D_1$  satisfy the conditions of the Klein-Maskit combination 
theorem.  Since $h^{-1}\circ G' \circ h$ is a M\"obius copy of $G'$ we can assume, by taking 
suitable M\"obius copies, that the groups $\Gamma'$ and $G'$ already satisfy the conditions 
of Theorem \ref{th.KleinMaskit}. By Theorem \ref{th.KleinMaskit}, the orbit space 
$S(\langle \Gamma',G'\rangle)$ is conformally 
equivalent to $S'\sqcup T' \sqcup ((S')^*\# (T')^*).$

Consider elements  $\{e,{\sigma}_{2},...,{\sigma}_{n}\} \subset {\Gamma}'$ and 
$\{e,g_{2},...,g_{m}\} \subset G'$  such that ${\Gamma}'={\Gamma} \cup ...\cup 
{\sigma}_{n}{\Gamma}$  and $G'=G \cup ...\cup g_{m}G$. Let $\Gamma_i=g_i\circ 
\Gamma'\circ g_i^{-1}$ be $m-1$ M\"obius copies of $\Gamma'$ and $G_j=\sigma_j \circ G' \circ 
\sigma_j^{-1}$ be $n-1$ M\"obius copies of $G'.$ Then  by an inductive application of the Klein-Maskit combination theorem the group $H=\langle \Gamma,G,
\Gamma_2,...,\Gamma_{m},G_2,...,G_n\rangle$ is isomorphic to
$$\Gamma* G * \prod_{i=2}^m \Gamma_i*\prod_{j=2}^n G_j.$$ 
Hence, the manifold $M(H)$ is a disk sum of the manifolds $M(\Gamma), M(G), M(\Gamma_i)$ and $M(G_j),$ where the latter are $n-1$ M\"obius copies of $M(\Gamma')$ and $m-1$ M\"obius copies of $M(G')$, respectively. 

The inclusion of $H$ in $\Gamma' * G'$ induces a holomorphic (orbifold) covering
$$\hat{\iota}:M(H)\rightarrow M(\langle 
\Gamma',G'\rangle)$$ which has finite degree, thus $H$ has finite index in $\langle 
\Gamma',G'\rangle$. 
Then the restriction $\hat{\iota}:S(H)\rightarrow S(\langle 
\Gamma',G'\rangle)$ is so
that there exist three surfaces $S_1,$ $S_2$ and $S_3$ in $S(H)$ 
where $deg(\hat{\iota}|_{S_j})>1$, the space $S_1\sqcup S_2 \sqcup S_3$ is conformally 
equivalent to $S\sqcup T \sqcup U$  and $\hat{i}(S_1\sqcup S_2 \sqcup S_3)$ is 
conformally equivalent to $S'\sqcup T' \sqcup W$. Moreover, the map 
$\hat{\iota}$  belongs to $CH(R_1,R_2,Q_1\#Q_2)=CH(R_1)\sqcup CH(R_2)\sqcup 
CH(Q_1\#Q_2).$ If $O\subset S(H) \setminus \{S_1\sqcup S_2 \sqcup S_3\}$, then 
$\hat{\iota}:=O\rightarrow \hat{\iota}(O)$ is an univalent holomorphic surjective map, even 
more $\hat{\iota}(O)$ is either $S'$ or $T'.$ So $\hat{\iota}$ is a non-simple holomorphic covering
containing single univalent components. 

In conclusion, $\hat{\iota}$ is a M\"obius morphism which defines a hyperbolic cobordism between 
the collections $R_1:S\rightarrow S'$, $R_2:T\rightarrow T'$, $Q_1'\#Q_2:U\rightarrow 
W$ and single univalent components.  

For the general case with three or more coverings $R_{i}:S_{i} \rightarrow S'_{i}$, 
$i=1,...k$, we proceed inductively. This finishes the proof. \end{proof}

Now what can we say about non-simple holomorphic coverings, we start with the following examples of uniformizable non-simple holomorphic coverings.

\begin{enumerate}

\item  Let $\Gamma$ be a geometrically finite Fuchsian group such that $\gamma\circ 
\Gamma \circ \gamma^{-1}=\Gamma$ for $\gamma(z)=\frac{1}{z}$. Then 
$G=\langle \Gamma, \gamma\rangle$ is isomorphic to an $HNN$-extension of $\Gamma$ 
and is a geometrically finite Kleinian group.  So that $M(G)=(\Omega(G)\cup B)/G$ is a geometrically finite hyperbolic orbifold with connected boundary which is a hyperbolic orbifold 
conformally equivalent to $\mathbb{D}/\Gamma.$ More, the inclusion 
$\Gamma< G$ induces a degree $2$ branched covering $\pi:M(\Gamma) 
\rightarrow M(G)$ so that $\pi(\partial M(\Gamma))=\partial M(G)$ and for any 
component $S\in \partial M(\Gamma)$ the restriction $\pi|_S:S\rightarrow \partial 
M(G)$ is a conformal equivalence. Now let $\Gamma_0< \Gamma$ be a 
subgroup of index $d.$  Then the inclusion $\Gamma_0$ in $\Gamma$ induces
a branched covering map $p:M(\Gamma_0)\rightarrow M(\Gamma)$ of degree $d$, so 
that for any component $S\subset \partial M(\Gamma_0)$ the restriction $p|_S$ is 
an orbifold covering map of degree $d.$ Then $\pi \circ p$ is a non-simple 
holomorphic covering of degree $d$. 

\item Let $\Gamma$  be a Kleinian group such that $\partial M(\Gamma)$ is 
connected, the  components of $\Omega(\Gamma)$ are simply connected with 
stabilizers of infinite index. Such groups are known as web-groups. Since 
geometrically finite Kleinian groups are also residually finite we can choose a 
subgroup $H<\Gamma$ of finite index such that  $\partial M(H)$ is disconnected, the map 
$\pi:\partial M(H)\rightarrow \partial M(\Gamma)$ given by the canonical holomorphic orbifold 
covering induced by the inclusion $H\subset \Gamma$ is non-simple with at least two 
components $S_1$ and $S_2\subset \partial M(H)$ such that 
$deg(\pi_{S_1})>deg(\pi|_{S_2}).$
\end{enumerate}

We have not found in the literature whether for any connected hyperbolic Riemann
surface $S$ there exists a web group $G$  with $\partial M(G)$ conformally
equivalent to $S$. However, M. Kapovich kindly pointed that this construction can be 
done using the Brooks Deformation theorem (see \cite{KapovichHyp}).

Given a holomorphic covering $(R,S,S')$. If $S'$ is connected then we say 
that $R$ is primitive. We call a primitive holomorphic covering uniformizable 
if there exists a pair of web groups $H< \Gamma$, with $H$ of finite index, so 
that the canonical holomorphic covering $\pi:\partial M(H)\rightarrow M(\Gamma)$ 
belongs to $CH(R,S,S')$. So far we have no examples of non-simple non-uniformazible 
holomorphic coverings. \\

\noindent \textbf{Connected sum of non-simple  coverings.} Given two primitive holomorphic 
coverings $(R_1,S_1,S'_1)$ and $(R_2,S_2,S'_2)$,  we construct a connected sum 
$R_1\#R_2$, in a similar but slightly different way as in Subsection \ref{subconnectedsums} as follows: for $i=1,2$ fix 
two open topological disks $t_i\subset S'_i$, not containing branching points of $R_i$, 
respectively,  together with a homeomorphism $\phi:\partial t_1 \rightarrow \partial t_2$.
Let $g_1$ be a single component of $R_1$, then the map $\phi\circ g_1$
defines a family of homeomorphisms from the components of $g_1^{-1}(\partial 
t_1)$ to $\partial t_2$. Let $S''_2=S'_2\setminus t_2$ and we glue the copies of 
$S''_2$ to the surface  $g_{1}^{-1}(S'_1\setminus t_1)$ along the family of  
homeomorphisms $\phi\circ g_1$ on $g_1^{-1}(\partial t_1)$. Using induction with 
respect to all single components of $R_1$ we construct a
 non-simple holomorphic covering $(\hat{R}_1,T,T')$ with $T'=S'_1\# S'_2$.
Now repeat the process for a single component $g_2$ of $R_2$ to get a non-simple 
holomorphic covering $(\hat{R}_2,W,W')$ with $W'=S'_1\#S'_2$ and finally we put
$R_1\#R_2=(\hat{R}_1,T,T')\sqcup (\hat{R}_2,W,W')$ which is a primitive
non-simple holomorphic covering. 

Let us note that the case  of holomorphic coverings over a surface of genus zero is
special in the following sense: let $R_1$ and $R_2$ be non-simple holomorphic 
coverings onto the Riemann sphere with finitely many points removed. If $g$ is a 
single component of $R_1$ then the induced single component $f$ of $R_1\# R_2$
belongs to $H(g)$ up to forgetting additional perforations on the source and the target 
surfaces. We call such primitive covering a \textit{primitive covering of genus zero}. 
This observation leads to the following lemma.

\begin{lemma}\label{lemma8}
Let $R_1:S_1\rightarrow S'_1$ and $R_2:S_2\rightarrow S'_2$ be holomorphic coverings 
over $S'_1$ and $S'_2$ which are Riemann spheres with finitely many points 
removed. Then we can choose for $i=1,2$ disks $t_i\subset S'_i$ and a gluing map 
$\phi:\partial t_1\rightarrow \partial t_2$ such that every single component of  $R_1\# 
R_2$ belongs to either $CH(R_1)$ or $CH(R_2)$ up to forgetting additional 
perforations. 
\end{lemma}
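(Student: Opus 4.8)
The plan is to exploit the genus-zero hypothesis together with the freedom, granted by the statement, in choosing the disks $t_i$ and the welding map $\phi$: by making these choices conformal we will upgrade the purely topological observation preceding the lemma (that each single component of $R_1\#R_2$ lies in the topological Hurwitz class $H(g)$ of a single component $g$) to a statement about the conformal Hurwitz class $CH$.

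First I would fix the gluing data. Writing $S'_i=\overline{\C}\setminus P_i$, where $P_i$ is the finite set of punctures, I choose $t_1\subset S'_1$ to be a round disk disjoint from $P_1$ and from the critical values of $R_1$ (this is possible and is already required by the construction), and I choose $t_2\subset S'_2$ to be a round disk disjoint from $P_2$. Let $m$ be a M\"obius transformation carrying the closed region $\overline{\C}\setminus t_2$ conformally onto the closed disk $\overline{t_1}$, and set $\phi:=m^{-1}|_{\partial t_1}:\partial t_1\to\partial t_2$. The point of this choice is that, after filling in the punctures $P_2$, the welded target $T'=S'_1\# S'_2$ is reassembled as $(\overline{\C}\setminus t_1)\cup_{\mathrm{id}}\overline{t_1}=\overline{\C}$ via the map equal to the identity on $S'_1\setminus t_1$ and to $m$ on the glued copy of $S''_2$; in particular $T'$ is literally the open subset $\overline{\C}\setminus(P_1\cup m(P_2))$ of the sphere, and forgetting the additional perforations $m(P_2)$ returns $T'$ to $\overline{\C}\setminus P_1=S'_1$ on the nose, not merely up to homeomorphism.

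With the target under control, the source side follows from elementary covering theory. Because $t_1$ avoids the critical values of $R_1$, every single component $g:X\to S'_1$ of $R_1$ restricts to a trivial $\deg(g)$-sheeted cover over $t_1$; the connected sum replaces each of these $\deg(g)$ disk preimages by a copy of $S''_2$ mapped homeomorphically, producing a single component $\hat g$ of $\hat R_1$. After filling the $P_2$-punctures in these copies, $\hat g$ becomes a holomorphic cover $\hat g^{\circ}:\hat X^{\circ}\to \overline{\C}\setminus P_1=S'_1$ which agrees with $g$ over $S'_1\setminus t_1$ and is a trivial $\deg(g)$-sheeted cover over $t_1$, exactly like $g$. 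Since the inclusion $S'_1\setminus t_1\hookrightarrow S'_1$ is $\pi_1$-surjective, the two covers have identical monodromy and are therefore equivalent, and the equivalence is automatically conformal as both maps are local biholomorphisms; hence $\hat g^{\circ}\in CH(g)\subset CH(R_1)$. The same argument applied to $\hat R_2$, with the roles of the two factors interchanged (so that the perforations forgotten are the images of $P_1$ under the analogous M\"obius gluing $m'$ built from a round disk in $S'_1$), shows every single component of $\hat R_2$ lies in $CH$ of a single component of $R_2$. Since $R_1\#R_2=\hat R_1\sqcup\hat R_2$, this proves the lemma.

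I expect the only real obstacle to be the conformal, as opposed to merely topological, control of the welded surface --- that is, guaranteeing that filling the extra punctures returns the target conformally to $S'_1$. This is precisely where the genus-zero hypothesis is indispensable: the complementary region $\overline{\C}\setminus t_2$ becomes, after filling $P_2$, conformally a round disk, so it can be M\"obius-welded into $t_1$ to rebuild the sphere with its original complex structure. For targets of positive genus no such capping is available and the argument would break down, which is consistent with the paper's restriction to \emph{primitive coverings of genus zero}.
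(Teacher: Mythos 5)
Your proposal is correct and takes essentially the same route as the paper: the paper fixes a round disk $t\subset S'_1$ avoiding the branch values of $R_1$, picks $\gamma\in PSL(2,\C)$ sending the complement of $S'_2$ (together with the branch values of $R_2$) into $t$, and takes $t_1=t$, $t_2=\overline{\C}\setminus t$ with $\phi=\mathrm{Id}$, so that the connected sum is realized ``in place'' on the sphere and each single component is literally the original covering with extra perforations. Your Möbius welding map $m$ is exactly the paper's $\gamma$, merely absorbed into the gluing homeomorphism rather than into the Möbius copy $\gamma\circ R_2$, and your monodromy verification just spells out what the paper leaves implicit.
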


\begin{proof}Let $t\subset \overline{\C}$ be a round open disk such that $t\subset S'_1$ 
and not containing branching points of $R_1.$ Let $X\subset \overline{\C}$ be the set 
$\overline{\C}\setminus (S'_2\cup V(R_2))$, where $V(R_2)$ is the set of branching points 
of $R_2$, and $\gamma\in PSL(2,\C)$ is so that $\gamma(X)\subset t.$ Then for the 
coverings $R_1$ and $\gamma\circ R_2$ we choose $t_1=t$ and 
$t_2=\overline{\C}\setminus t.$ Taking $\phi=Id$ on $\partial t$ finishes the proof of the 
lemma.
\end{proof}

Another application of Klein-Maskit combination theorem and the arguments of the proofs of  Theorem \ref{TheoremA} and Lemma \ref{lemma8} allow us to improve 
Theorem \ref{TheoremA} as follows. 

\begin{theorem}\label{Theorem1prima} Let $R_1$ and $R_2$ be two primitive 
uniformizable  genus zero holomorphic coverings then there exists a primitive uniformizible
genus zero holomorphic covering $Q$ so that for every single component $q$ of $Q$ there 
exist $r$, which is either a single component of $R_1$ or a single component of $R_2$, so that $q\in CH(r)$ after, perhaps, forgetting additional perforations.
\end{theorem}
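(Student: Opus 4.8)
The plan is to realize the desired covering $Q$ as a connected sum $R_1\#R_2$, built so that Lemma \ref{lemma8} pins down the conformal Hurwitz class of every single component, and then to uniformize $Q$ by combining the uniformizing web-group data of $R_1$ and $R_2$ with the Klein--Maskit combination theorem, exactly as in the proof of Theorem \ref{TheoremA}. Since $R_1$ and $R_2$ are genus zero, I would first apply Lemma \ref{lemma8} to choose round disks $t_i\subset S'_i$ and the gluing map $\phi:\partial t_1\rightarrow \partial t_2$ provided there, and set $Q:=R_1\#R_2$. By the construction of the connected sum of non-simple coverings, $Q$ is a primitive covering whose target $T'=S'_1\# S'_2$ is again a sphere with finitely many punctures, hence genus zero; and by Lemma \ref{lemma8} every single component of $Q$ lies in $CH(R_1)$ or $CH(R_2)$ after forgetting the perforations created along the gluing. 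This already gives the Hurwitz-class conclusion, so the remaining task is to show that $Q$ is uniformizable.

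Next I would construct the uniformizing groups. Let $H_i<\Gamma_i$ be the web-group pairs uniformizing $R_i$, with $\partial M(\Gamma_i)=S'_i$ and $\partial M(H_i)$ the source of $R_i$. After conjugating by Möbius maps I would place $\Gamma_1$ and $\Gamma_2$ in Klein--Maskit position, so that a gluing circle $\gamma$ bounding complementary disks $D_1,D_2$ with $\overline{D_i}$ inside the fundamental domains realizes the connected sum along $t_1,t_2$. Put $\Gamma=\langle \Gamma_1,\Gamma_2\rangle$. By Theorem \ref{th.KleinMaskit}, $\Gamma\cong \Gamma_1\ast\Gamma_2$ is geometrically finite and $S(\Gamma)\cong (S'_1\setminus S_1)\sqcup(S'_2\setminus S_2)\sqcup(S_1\# S_2)$, which is precisely $S'_1\# S'_2=T'$, conformally away from the gluing annulus. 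To build the source group I would follow the inductive Klein--Maskit argument of Theorem \ref{TheoremA}, attaching one Möbius copy of the relevant factor for each sheet of $R_1$ over $t_1$ (and symmetrically), obtaining $H<\Gamma$ of finite index whose induced covering $\hat\iota:\partial M(H)\rightarrow \partial M(\Gamma)$ lies in $CH(Q)$.

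The main obstacle is verifying that $\Gamma$ is again a web group, so that $Q$ is genuinely uniformizable rather than merely a holomorphic covering. The components of $\Omega(\Gamma)$ that are translates of components of $\Omega(\Gamma_i)$ stay Jordan domains with infinite-index stabilizers, since the free-product structure conjugates each such stabilizer into a factor in which it already has infinite index, so conditions on those components are inherited from the web groups $\Gamma_i$. The delicate case is the component covering the connected-sum region $S_1\# S_2$: this piece is an annulus, so I must check that the corresponding component of $\Omega(\Gamma)$ is simply connected with cyclic, hence infinite-index, stabilizer. This should follow because the combination glues the two Jordan components $K_1,K_2$ along the single circle $\gamma$, and the annular quotient is covered by a simply connected domain stabilized by $\mathbb{Z}$.

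Granting this, $\partial M(\Gamma)=S(\Gamma)$ is connected, every component of $\Omega(\Gamma)$ is a simply connected domain with infinite-index stabilizer, so $\Gamma$ is a web group; together with the finite-index inclusion $H<\Gamma$ and $\hat\iota\in CH(Q)$ this exhibits $Q$ as a primitive uniformizable genus zero covering, completing the proof. Should the direct combination fail to be a web group, I would instead realize the target $T'$ by a web group through the Brooks deformation theorem (Theorem \ref{th.geofintor}), and then transport the covering data of $Q$ to a finite-index subgroup of that group.
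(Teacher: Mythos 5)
Your first two paragraphs follow exactly the paper's proof (which is itself only two sentences long): form $Q=R_1\#R_2$ using the disks supplied by Lemma \ref{lemma8}, put the uniformizing web groups $\Gamma_1,\Gamma_2$ in Klein--Maskit position, and pass to a finite-index subgroup of $\Gamma_1\ast\Gamma_2$ as in the proof of Theorem \ref{TheoremA}. The gap is in the step you yourself single out as the main obstacle, and your treatment of it is incorrect on every count. First, no component of $\Omega(\Gamma)$, where $\Gamma=\langle\Gamma_1,\Gamma_2\rangle$, is a translate of a component of $\Omega(\Gamma_i)$: since a web group has connected quotient, every component of $\Omega(\Gamma_i)$ is a $\Gamma_i$-translate of the component $K_i$ containing the combination curve $\gamma$, and $K_i\supset \overline{D}_i\supset \Lambda(\Gamma_j)$ for $j\neq i$ (web groups are nonelementary, so these limit sets are nonempty); hence every component of $\Omega(\Gamma_i)$ contains points of $\Lambda(\Gamma)$ and is subdivided further in $\Omega(\Gamma)$. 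Second, the component $\Omega_0$ of $\Omega(\Gamma)$ containing $\gamma$ is never simply connected: the two complementary disks $D_1,D_2$ of $\gamma$ contain $\Lambda(\Gamma_2)$ and $\Lambda(\Gamma_1)$ respectively, and a simple closed curve in a domain of $\overline{\C}$ is null-homotopic there only if one of its complementary disks lies in the domain, so $\gamma$ is essential in $\Omega_0$. Third, since $S(\Gamma)\cong S'_1\#S'_2$ is connected, all components of $\Omega(\Gamma)$ are $\Gamma$-equivalent and $\Omega_0/Stab(\Omega_0)$ is the whole surface $S'_1\#S'_2$, not an annulus with cyclic stabilizer. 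In short, the Klein--Maskit combination of two web groups is never itself a web group, so the property you set out to verify is actually false, not merely unverified.

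To be fair, the paper's own proof is silent on precisely this point, even though its definition of uniformizable demands a pair of web groups; you correctly located the soft spot but filled it with false structural claims about $\Omega(\Gamma)$. Your fallback --- realize $T'$ by a web group via Brooks's deformation theorem and then ``transport the covering data'' --- points in a sensible direction (the paper credits Kapovich with the observation that any surface bounds $M(G)$ for some web group $G$), but as written it is not an argument: the entire difficulty is to produce a finite-index subgroup $H<G$ whose induced boundary covering $\partial M(H)\rightarrow \partial M(G)$ lies in $CH(Q)$, and that covering is governed by the three-dimensional topology of $M(G)$, not by the conformal structure of $T'$ alone, so it cannot simply be transported. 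Note also that Theorem \ref{th.geofintor} only provides a disk pattern after a quasiconformal deformation; it is an ingredient of, not a substitute for, the needed realization.
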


\begin{proof} Let $\Gamma_1$ and $\Gamma_2$ be web-groups uniformizing 
$R_1$ and $R_2$. As in Theorem \ref{TheoremA} we apply the Klein-Maskit 
combination theorem to $\Gamma_1$ and $\Gamma_2$ and construct a 
finite index subgroup of $\Gamma_1*\Gamma_2$ compatible to the connected sum
$R_1\# R_2.$
\end{proof}

Remark. \textit{The Klein-Maskit theorems are  generalized to Kleinian groups in higher 
dimensions, so our Theorem \ref{TheoremA} generalizes in that setting as well.}

\section{Proof of Theorem \ref{Th.Hurwitz}}\label{ProofHurwitz}
Theorem \ref{Th.Hurwitz} is a direct application of the Bers's simultaneous uniformization 
theorem.

\begin{proof}[Proof of Theorem \ref{Th.Hurwitz}.] 

Assume that $R_1$ and $R_2$ are two symmetric holomorphic coverings forming a 
simple  hyperbolic cobordism, then the respective geometrically finite Kleinian groups 
$\Gamma_1$ and $\Gamma_2$ are quasifuchsian.
We can assume that 
$\Gamma_1<\Gamma_2$. Since every quasifuchsian group is quasiconformally equivalent to 
a Fuchsian group then $\Gamma_2$ admits an orientation reversing quasiconformal 
involution $\tau:\overline{\C}\rightarrow \overline{\C}$ commuting with $\Gamma_2$, interchanging  
components of $\Omega(\Gamma_2)$ and which is the identity on the limit set 
$\Lambda(\Gamma_2)$. Hence, $\tau$ commutes with $\Gamma_1.$ Since $R_1$ and 
$R_2$ are symmetric, then $R_1$ and $R_2$ belong to the same Hurwitz class. 

Assume that two symmetric holomorphic coverings $R_1$ and $R_2$ belong to the 
same Hurwitz class then we can construct cobordisms between $R_1$ and  an
anticonformal copy of $R_1$, together with  a cobordism between $R_2$ and an anticonformal 
copy of $R_2$. The homeomorphisms $\phi$ and $\psi$ associated to $R_1$ and $R_2$ 
allows us to glue the given cobordisms along the anticonformal copies to get a 
cobordism between $R_1$ and $R_2.$
\end{proof}

What follows is an example of a topological cobordism between the simplest rational maps.

 Let us consider the convex combination between $z^2$ and $z^3$:
 $$f_t(z)=(1-t)z^2+t z^3$$ for $t\in I=[0,1].$ Then $f_t$ defines a rational endomorphism 
 $F$ of $3$-manifolds $X=\C\times I$ by the formula
 $$F(z,t)=(f_t(z),t).$$ Then
 \begin{itemize}
  \item[i)] The map $F$ is not a branched self-covering of $X$.
  \item[ii)] Let $X_{t_1,t_2}=\C\times [t_1,t_2]$, then the restriction 
  of $F$ to $X_{t_1,t_2}$ is a branched self-covering of $X_{t_1,t_2}$ for
  $0<t_1<t_2<1.$
  \item[iii)] If $0<t_1<t_2<1$, then for $t_1\leq t\leq t_2$ the real polynomial $f_t$ is 
  in general position and the sets of the critical values $\{v_1\}, \{v_2\}$ of $F$ in 
  $X_{t_1,t_2}$ forms two embedded arcs connecting the boundaries of 
  $X_{t_1,t_2}.$ Moreover, for each $i$ the set $F^{-1}(v_i)$ consists of 
  two curves $\alpha_{i,j}$, with $j=1,2$, one is mapped homeomorphically onto the image by $F$, while the other consists of the critical points. Let 
  $M=X_{t_1,t_2}\setminus (v_1\bigcup v_2)$ and $M=F^{-1}(M')\subset X_{t_1,t_2}$ be two three manifolds, then $F:M\rightarrow M'$ 
  is a covering. The manifold $M$ is homeomorphic to the product of 
  the five punctured sphere times a closed interval and the manifold $M'$ is homeomorphic
  to the three punctured sphere times a closed interval.
 Since $f_{t_i}$ are symmetric maps and $f_{t_2}\in H(f_{t_1})$, then 
 by Theorem \ref{Th.Hurwitz} the maps  $f_{t_1}$ and $f_{t_2}$ are simply cobordant,
 then on $M$ and $M'$ there are hyperbolic structures depending on the 
 extremes $t_1$ and $t_2$ such that $F$ is a M\"obius morphism on $M.$
 This means that  for $0<t_1<t_2<1$ we can define two 
 hyperbolic $3$-dimensional orbifold structures on which the map $F$ is 
 a M\"obius morphism.  
 
 \item[iv)] The endomorphism $F:X\rightarrow X$ is a Hausdorff limit (this is a 
 particular case of Gromov-Hausdorff  limit) of M\"obius 
 morphisms $F:X_{t_1,t_2}\rightarrow X_{t_1,t_2}$ for $t_1\rightarrow 0$ and
 $t_2\rightarrow 1$.
 \item[v)] If $t\neq 0,1$, then we can obtain two functions $f_0$ and $f_1$ from 
 the map $f_t$ using a pinching procedure with respect to peripheral curves.
 For convenience of the reader we sketch this procedure when $f_1(z)=z^3.$
 Fix a $t\neq 0, 1$ and a Jordan curve $\gamma\in \C$ so that $\alpha=f_{t}^{-1}
 (\gamma)$ is a connected Jordan curve.  Then the finite critical values belong to the 
 interior of $\gamma.$
 Let $A(\gamma)$ be an annular neighborhood of $\gamma$ so that 
 $A(\alpha)=f^{-1}_t(A(\gamma))$ is an annular neighborhood of $\alpha$ ad 
 $f_t:A(\alpha)\rightarrow A(\gamma)$ is a covering of degree $3.$ Let $\nu_j$ be 
 a sequence of Beltrami differentials supported in $A(\gamma)$ as constructed in 
 Section \ref{subsectionpinching} and consider the extension of each $\nu_j$ on $\overline{\C}$  
 by zero outside $A(\gamma)$. Let $\mu_j(z)=v_j(f_t)\frac{\overline{f'_t}}{f'_t}$ 
 be the pull-back of $\nu_j$ with respect to $f_t$. Let $\phi_j$ and $\psi_j$ 
 be solutions of the Beltrami equation for $\mu_j$ and $\nu_j$ respectively, 
 with $\phi_j(0)=\psi_j(0)=0,$  $\phi_j(\infty)=\psi_j(\infty)=\infty,$ and $\phi'_t(\infty)=\psi'_j(\infty)=1$, then
 $p_j=\psi_j\circ f_t \circ \phi_j^{-1}$ are polynomials of degree $3.$ Moreover, 
 $\phi_j$ forms a family of univalent normalized holomorphic functions defined on 
 a neighborhood of infinity $V,$ then $\psi_j$ also forms a holomorphic family 
 on $U=f_t^{-1}(V).$ After taking a suitable subsequence we can assume that
 $\phi_j\rightarrow \phi_0$ and $\psi_j\rightarrow \psi_0$ converge uniformly on 
 compact subsets of $U$ and $V$, respectively. Moreover, $\phi_0$ and $\psi_0$ are non 
 constant functions. Hence $p_j$ converges to a degree $3$ polynomial $p_0$ and, 
 even more, $p_0|_{\phi_0(U)}=\psi_0\circ f_t \circ \phi_0^{-1}|_{\phi_0(U)}$. We claim that 
 $p_0(z)=z^3,$ otherwise, $p_0$ has a critical value $v_0\neq 0,\infty.$ Thus $p_j$ 
 also has a finite critical value $v_j$ converging to $v_0.$ But 
 $v_j=\psi_j(\frac{4}{27}{(1-t)^3}{t^2})$ belongs to the bounded component of 
 $\C\setminus \{\psi_j(A(\gamma))\}.$ Since $\psi_j$ converges to $\psi_0$ and the 
 moduli of the annuli $\psi_j (A(\gamma))$ converges to $\infty, $ we have a contradiction to 
 $v_0\neq 0.$ Thus $p_0(z)=z^3$ as claimed.

 In the case where $f_0(z)=z^2$, we consider a curve $\gamma$ closed to $0$ so that $f_t^{-1}
 (\gamma)$ is the union of two curves $\alpha$ and $\beta$. Here $f_t$ is a degree 
 $2$ covering on $\alpha$, and $\beta$ belongs to the exterior of $\alpha.$ Now we take the 
 normalization of $\phi_j$ and $\psi_j$  given by $\phi_j(0)=\psi_j(0)=0,$ $\phi'_j(0)=\psi'_j(0)=1$
 and $\phi_j(\infty)=\psi(\infty)=\infty$ and proceed as above.
 
 \item[vi)]Now  on $X_{t_1,t_2}$ with $0<t_1<t_2<1$ 
 we put a quasiconformal deformation converging on the components of the boundary 
 $\partial X_{t_1,t_2}$ to $z^2$ and $z^3$, respectively to get a limit.
 This limit seems to be a sort of double  limit converging in Hausdorff topology to a 
 non-uniformizable object. This would give a  non-geometric completion of the respective 
 Teichm\"uller space.  We suspect that any limit 
 of this type belongs to $H(F,\C\times [0,1],\C\times [0,1]).$
 
 \item[vii)] Let $J(f_t)$ be the Julia set of $f_t$ then $J(F)= \bigcup_{t\in [0,1]} (J(f_t),t) \subset \C \times [0,1]$, is a closed set 
 completely invariant under $F$. Computer experiments suggest that
 $\C\times I \setminus J(F)$ consists of two components. The one containing the set 
 $\{0\}\times [0,1]$ is simply connected and the other one has infinitely generated 
 fundamental group. The experiments also suggest that the set $J(F)$ is a non-locally connected embedding 
 of $\mathbb{S}^1\times [0,1]$ with infinitely many cusps accumulating to
  a compact subset of  the interior of $\mathbb{S}^1\times [0,1]$. 
 \end{itemize}
 
\section{Proof of Theorem \ref{TheoremB}}\label{proofTheoremB}

As the previous discussion showed, the construction of cobordisms between 
rational maps often involves the introduction of single univalent components.
In this chapter we look for a construction avoiding  these components. 
We call this type of construction a \textit{pure cobordism}.

Let $\mathbb{D}_\infty$ be the completion of $\C$ by adding the circle at infinity 
$\infty\cdot \mathbb{S}^1$. Then any monic holomorphic polynomial $P$ of degree $d$ 
can be extended to the circle at infinity by the formula 
$\infty\cdot e^{i\theta}\mapsto \infty\cdot e^{d i \theta}$ so that  $P$ defines 
a branched self-covering $\hat{P}$ of $\mathbb{D}_\infty$ and $\hat{P}|_{\partial 
\mathbb{D}_\infty}(z)=z^d$. Now we identify $\mathbb{D}_\infty$ with the unit disk $\mathbb{D} \subset \C.$
Let us consider two monic polynomials $P_1$ and $P_2$, 
acting on $\C$, then define a finite degree branched covering $F:\overline{\C}\rightarrow 
\overline{\C}$ as follows $$
 F(z) = 
  \begin{cases}
    \hat{P}_1(z), & \text{for } z\in \mathbb{D} \\
    \gamma \circ\hat{P}_2 \circ \gamma(z), & \text{for } z \in \overline{\C}\setminus \mathbb{D}, \gamma(z)=\frac{1}{\overline{z}}.
  \end{cases}
 $$
 
In holomorphic dynamics the map $F$ is known as the \textit{formal mating} of the monic 
polynomials $P_1$ and $P_2$ (see for example  \cite{MilnorMating} or \cite{TanMating}). 

Now we need the following theorem. 

\begin{theorem}\label{Purecobordism}Given two polynomials $P_1$ and 
$P_2$ of the same degree $d>2$ with canonical holomorphic representatives.
Assume one of them, say $P_2$ is symmetric, then there exists a rational map 
$R$ such that the following holomorphic coverings form a hyperbolic cobordism:
\begin{itemize}
 \item $P_1:\C\setminus (P_1^{-1}(V(P_1)))\rightarrow \C\setminus V(P_1)$,
 \item $P_2:\C\setminus (P_2^{-1}(V(P_2)))\rightarrow \C\setminus V(P_2)$,
 \item $R:\C\setminus (R^{-1}(V(R)))\rightarrow \C\setminus V(R)$.
\end{itemize}
\end{theorem}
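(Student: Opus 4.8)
The plan is to run the pinching argument of Section~\ref{proofA} with the formal mating $F$ of $P_1$ and $P_2$ in the role played there by the connected sum. Concretely, I would take $R$ to be a rational representative of the Hurwitz class $H(F)$; such a map of degree $d$ exists because the Hurwitz class of any finite-degree branched self-covering of $\overline{\C}$ contains a rational map of the same degree. The decisive structural point is that the mating is a genuine degree-$d$ self-covering of the sphere built by gluing two degree-$d$ maps along their circles at infinity, where $F$ acts as $z\mapsto z^d$; unlike the pull-back used to form a connected sum, this gluing does not multiply the number of sheets, so the pinching will split $R$ into $P_1$ and $P_2$ along a single equatorial curve without ever creating the univalent companion components that were unavoidable in the Klein--Maskit proof of Theorem~\ref{TheoremA}. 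This is what makes the cobordism pure.

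First I would fix the canonical holomorphic representative of such an $R$ and uniformize its base $\overline{\C}\setminus V(R)$ and its source $\overline{\C}\setminus R^{-1}(V(R))$ by Fuchsian groups $\Gamma_R<\Gamma_R'$ acting on $\mathbb{D}$, the inclusion of index $d$ inducing $R$. The circle $\mathbb{S}^1$ on which $F$ acts as $z\mapsto z^d$ straightens to a single simple closed curve $C$ in the base of $R$ separating the $d-1$ critical values coming from $P_1$ from the $d-1$ coming from $P_2$, and $R^{-1}(C)$ is a single curve covering $C$ with degree $d$. I would then apply the pinching theorem~\ref{th.pinching} to $C$, with the deformation supported on the $\mathbb{D}$-side and holomorphic outside $\overline{\mathbb{D}}$, and simultaneously pinch $R^{-1}(C)$ upstairs, obtaining geometrically finite limit groups $\Gamma_{R,\infty}<\Gamma_{R,\infty}'$ and an induced M\"obius morphism $\Re$ between the corresponding $3$-orbifolds.

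By the same analysis as in Section~\ref{proofA}, the $\mathbb{D}^*$-side survives conformally and contributes a boundary component carrying the anticonformal copy $R^*$ of $R$---again a rational map, which is the map named $R$ in the statement---while on the $\mathbb{D}$-side the base of $R$ is cut along $C$ into its two hemispheres. The accidental cusps created by the pinching are exactly the punctures at infinity of the two polynomials, so the two hemispheres are conformally the canonical bases $\C\setminus V(P_1)$ and $\C\setminus V(P_2)$, and the pinched preimage $R^{-1}(C)$ supplies the totally ramified cusp over each. The $P_1$-hemisphere realizes $P_1$ directly in $CH(P_1)$; the exterior hemisphere carries the mirror map $\gamma\circ\hat{P}_2\circ\gamma$, which lands in $CH(P_2)$ precisely because $P_2$ is symmetric and hence conformally equivalent to its anticonformal copy. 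Thus $\partial M(\Gamma_{R,\infty}')$ has the three non-univalent components conformally equivalent to the bases of $P_1$, $P_2$ and $R$, with $\Re$ restricting to an element of $CH(P_1)\otimes CH(P_2)\otimes CH(R)$, which is the assertion. The symmetry of $P_2$ can also be installed geometrically by adjoining to the group on the exterior side the reflection in $\mathbb{S}^1$, exactly as in the folding example following Theorem~\ref{th.geofintor}; this is where Brooks's deformation theorem (Theorems~\ref{th.geofintor} and~\ref{th.Theorem6}) enters and keeps the exterior hemisphere a single conformal copy of $P_2$ rather than its mirror.

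The main obstacle is verifying that the pinched limit is an honest geometrically finite hyperbolic orbifold realizing exactly these three boundary maps. Concretely one must check that $C$ and $R^{-1}(C)$ have loxodromic representatives in distinct conjugacy classes so that Theorem~\ref{th.pinching} applies, that the accidental cusps match the polynomial structures so that the hemispheres are the canonical representatives and not merely quasiconformal perturbations of them, and---heeding the warning of the annulus example preceding Theorem~\ref{TheoremA}---that the construction introduces no essential torus non-homotopic to the ideal boundary that would obstruct hyperbolicity. The hypothesis $d>2$ is used to guarantee that each hemisphere, a sphere with $d\ge 3$ marked points, is hyperbolic, and the symmetry of $P_2$ is exactly what is needed for the exterior hemisphere to lie in $CH(P_2)$; these are the two points where the stated assumptions are indispensable.
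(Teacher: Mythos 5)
Your strategy is the paper's own: take a rational representative $Q\in H(F)\cap Rat$ of the formal mating, uniformize its canonical representative by Fuchsian groups acting on $\mathbb{D}$, pinch the equatorial curve (the paper's $\delta$, your $C$) with a deformation holomorphic on $\mathbb{D}^*$, and let the surviving $\mathbb{D}^*$-side carry the anticonformal copy, which is the $R$ of the statement. But there is a genuine gap at the decisive step. You assert that after pinching ``the two hemispheres are conformally the canonical bases $\C\setminus V(P_1)$ and $\C\setminus V(P_2)$'' and that the hemisphere coverings land in $CH(P_1)$ and $CH(P_2)$; in fact you flag exactly this as an obstacle in your last paragraph, but the proposal contains no mechanism to resolve it. Pinching does not deliver this: the conformal structures of the two hemisphere components are inherited from whichever rational representative $Q$ was chosen and from the pinching deformation itself, so all one obtains is that these components carry holomorphic coverings conformally equivalent to \emph{some} polynomials $Q_1\in H(P_1)$ and $Q_2\in H(P_2)$ (the second via orientation-\emph{reversing} homeomorphisms, since the exterior hemisphere of the mating carries $\gamma\circ\hat P_2\circ\gamma$). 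Since Theorem \ref{Purecobordism} assumes no general position, membership in the topological class $H(P_i)$ does not imply membership in $CH(P_i)$ --- there is no control of the critical values up to M\"obius transformations --- so the cobordism you exhibit bounds the wrong conformal classes. Relatedly, ``$P_2$ symmetric'' does not mean $P_2$ is conformally equivalent to its anticonformal copy, only that $H(P_2)$ contains an anticonformal copy of one of its elements; that is weaker than what your argument uses.

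The missing idea, which is how the paper finishes, is a terminal quasiconformal deformation. By Teichm\"uller's theorem the conjugating homeomorphisms $\phi_i$, $\psi_i$ with $\phi_i\circ Q_i=P_i\circ\psi_i$ can be taken quasiconformal, and the symmetry of $P_2$ is used precisely to replace the orientation-reversing equivalence of $Q_2$ with $P_2$ by an orientation-preserving quasiconformal one. One then forms the Beltrami differential equal to $\overline{\partial}\phi_1/\partial\phi_1$ on the first hemisphere component, $\overline{\partial}\phi_2/\partial\phi_2$ on the second, and $0$ on the component carrying $R$, lifts it to $\Omega(\Gamma_{2,\infty})$, solves the Beltrami equation by $f_\nu$, and replaces the pinched pair $\Gamma_{1,\infty}<\Gamma_{2,\infty}$ by $G_i=f_\nu\circ\Gamma_{i,\infty}\circ f_\nu^{-1}$; only these deformed groups have boundary restriction in $CH(P_1)\otimes CH(P_2)\otimes CH(R)$, while the $R$-component is untouched because the coefficient vanishes there. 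Finally, your suggestion to ``install the symmetry geometrically'' by adjoining the reflection in $\mathbb{S}^1$ via Brooks's theorem is misplaced: as the folding example after Theorem \ref{th.geofintor} shows, adjoining that reflection turns the $\mathbb{D}^*$-side into interior orbifold points, i.e.\ it destroys the very boundary component carrying $R$ that the statement requires; in the paper Brooks's theorem plays no role in this proof and enters only later, in Lemma \ref{lemma.kap} and Theorem \ref{thIndbas}.
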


\begin{proof}
 Given two polynomials $P_1$and $P_2$ as in the statement of the theorem,  
 we can assume that $P_1$ and $P_2$ are monic. Let us consider their formal mating $F$. Let 
 $Q\in H(F,\overline{\C},\overline{\C})\cap Rat$ be a rational map. 
 First note that since $F^{-1}(\partial \mathbb{D})=\partial \mathbb{D}$
 then there is a Jordan curve $\delta\subset \overline{\C}$ such that
 $Q^{-1}(\delta )$ is a connected Jordan curve. If $V(Q)$  is the set of critical values 
 of $Q$ then  $Q$ is a holomorphic covering of finite degree from $S_1=\overline{C}\setminus Q^{-1}(V(Q))$ 
 to $S_2=\overline{\C}\setminus V(Q)$.
 
 Let $\Gamma_1<\Gamma_2$ be two finitely generated 
 Fuchsian groups uniformizing $Q:S_1\rightarrow S_2$ in $\mathbb{D}$. Then we 
 can assume that the map $R:\mathbb{D}^*/\Gamma_1\rightarrow \mathbb{D}^*/ 
 \Gamma_2$ induced by the inclusion is a rational map satisfying 
 $R(z)=\overline{Q(\overline{z})}$. We claim that the triple $(R,P_1,P_2)$ forms a cobordant
 family. 
 
 Indeed, let us apply a pinching procedure to the covering $Q$, with respect to the curve 
 $\delta$.  Then by Theorem \ref{th.pinching}, we obtain a Kleinian
 group $\Gamma_{2,\infty}$ with an isomorphism $\rho:\Gamma_2\rightarrow 
 \Gamma_{2,\infty}$ so that $\Gamma_{2,\infty}$ is a geometrically finite 
 functional group with $S(\Gamma_{2,\infty})=
 S_2'\sqcup T'_2 \sqcup T''_2$ where $S'_2$ is anticonformally equivalent to $S_2$ 
 and $T'_2$ and $T''_2$ are finitely punctured Riemannian spheres, each of them contains
 only one accidental cusp, that is determined by $\delta$.
 
On the other hand, we have a geometrically finite functional group $\Gamma_{1,\infty}$ satisfying $\Gamma_{1,\infty}<\Gamma_{2,\infty}$. In particular, since $R^{-1}
 (\delta)$ is connected then $\Gamma_{1,\infty}$ also is presented by pinching the group $\Gamma_1$ with respect to the curve $R^{-1}(\delta)$ and contains only one conjugacy class 
 of an accidental parabolic element. Hence $S(\Gamma_{1,\infty})=S'_1\sqcup T'_1 \sqcup T''_1$, where $S'_1$ is an anticonformal copy of $S_1$ and the surfaces $T'_1$ and $T''_1$ are finitely punctured spheres containing only one accidental cusp determined by  $R^{-1}(\delta).$  This implies that 
 the map $\alpha$ induced by inclusion is a holomorphic simple covering from 
 $S(\Gamma_{1,\infty})$ to $S(\Gamma_{2,
 \infty})$ so that $\alpha|_{S'_1}:S'_1\rightarrow S'_2$ belongs to
 $H(R)$. As the preimage of an accidental cusp is an accidental cusp, then  the  maps 
 $\alpha|_{T'_1}$ and $\alpha|_{T''_1}$ are in the Hurwitz conformal 
 classes of  some  polynomials, say $Q_1$ and $Q_2$ respectively. By construction $Q_1\in 
 H(P_1)$ and $Q_2\in H(P_2)$ with orientation reversing 
 homeomorphisms $\phi_2$ and $\psi_2$. Since  $P_2$ is 
 symmetric then the homeomorphisms $\phi_2$ and $\psi_2$ also can be chosen to be 
 quasiconformal orientation preserving homeomorphisms. If $\phi_i\circ Q_i=P_i\circ \psi_i$, 
 then let $\mu$ be the Beltrami differential on $S'_2\sqcup T'_2\sqcup T''_2$ given in local coordinates by 
 $$
 \mu(z) = 
  \begin{cases}
    \frac{\overline{\partial} \phi_1}{\partial\phi_1}(z) , & \text{on } T'_2\\
    \frac{\overline{\partial} \phi_2}{\partial\phi_2}(z), & \text{on } T''_2\\
    0, &\text{ on } S'_2.
  \end{cases}
 $$
 and let $\nu$ be the pull-back of $\mu$ by the orbit projection 
 $$\pi_2:\Omega(\Gamma_{2,\infty})\rightarrow \Omega(\Gamma_{2,
 \infty})/\Gamma_{2,\infty}.$$
 If $f_\nu$ is a solution of the Beltrami equation with respect to $\nu$, then the 
 groups $G_1=f_\nu \circ \Gamma_{1,\infty} \circ f^{-1}_{\nu}$ and $G_2=f_\nu 
 \circ \Gamma_{2,\infty} \circ f^{-1}_{\nu}$ satisfies our claim and finish the proof 
 of the theorem. 
 \end{proof}

 Let us note that, by Theorem \ref{th.pinching}, each of the manifolds $M(G_1)$ and $M(G_2)$, constructed in 
 the proof of the Theorem \ref{Purecobordism}, is homeomorphic to a set $U$ which is the complement of 
two open round $3$-dimensional balls $B_1$ and $B_2$ in the unit
 ball $B$ with finitely many embedded arcs, connecting the boundary components
 of $\partial U$, removed.  See Figure \ref{figballs.arcs}.
  
  \begin{figure}[h]\centering
 \scalebox{.7}{\input{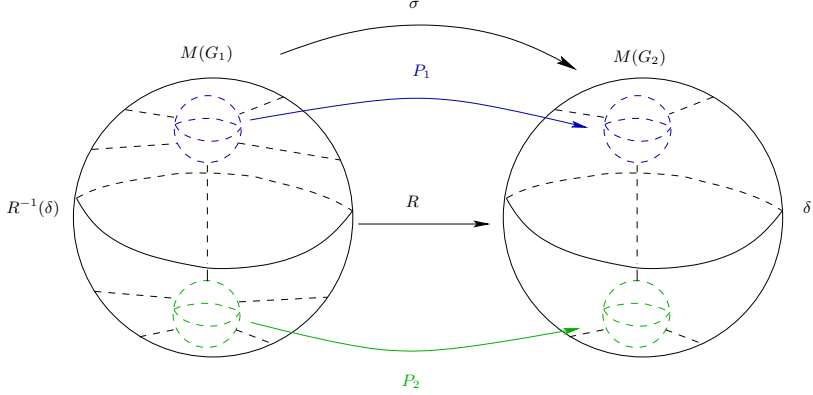}}
\caption{A sketch of the manifolds $M(G_1)$ and $M(G_2)$.}\label{figballs.arcs}
 \end{figure}
 
 The map $\sigma:M(G_1)\rightarrow M(G_2)$ induced by the inclusion $G_1<G_2$ 
 can be extended on $U$ as a finite degree branched covering 
 $\sigma^*:U\rightarrow U$ so that in suitable coordinates 
 $\sigma^*|_{\partial U}\in CH(R,\overline{\C},\overline{\C})\cup CH(P_1,\overline{\C},\overline{\C})\cup 
 CH(P_2,\overline{\C},\overline{\C})$. In other words, in $U$ there are two M\"obius orbifold 
 structures which makes $\sigma^*$ a  M\"obius morphism.
 
 Now if $\tau$  is the reflexion of $S^3$ with respect to the unit sphere, then 
 on $W=U\cup \tau(U)$ we can extend $\sigma^*$ to a finite degree self-covering
 of $W$ by putting 
 
  $$
 \Sigma(z) = 
  \begin{cases}
    \sigma^*(z) & \text{ on } U \\
    \tau\circ \sigma^* \circ \tau(z), & \text{on } \tau(U).
  \end{cases}
 $$
 
 Then $\Sigma$ serves as a topological cobordism between $P_1\sqcup P_2$ and its
 anticonformal copies. In what follows, we will show that on $W$ there are two 
 M\"obius structures under which $\Sigma$ is a M\"obius morphism.  The idea is to apply the arguments of the proof of Brooks's orbifold 
 deformation theorem to put
 on $M(G_2)$ a non-orientable uniformazible orbifold structure $\omega$ so that the 
 component $S_0\subset \partial M(G_2)$ corresponding to the unit sphere consists of 
 the ``interior points'' in the orbifold structure $\omega$ and other components 
 of $\partial M(G_2)$ equipped with $\omega$ are  M\"obius equivalent to the previous  
 structure.
 
 Then there exists a degree two covering $\beta:X\rightarrow (M(G_2),\omega)$ such 
 that $X$ is an orientable hyperbolic $3$-orbifold with four boundary components which are 
 mapped univalently by $\beta$ onto $\partial M(G_2)\setminus S_0.$
 
 The following statement is the main lemma of this section. Which is an application of 
 arguments of the proof of Brooks's orbifold deformation theorem (see  \cite{KapovichHyp}).
 
 \begin{lemma}\label{lemma.kap}
  Let $\Gamma$ a geometrically finite Kleinian group without torsion. Let $T\subset S(\Gamma)$ be a
  proper subcollection of surfaces, then there exist
  a geometrically finite Kleinian group $G$ such that $S(G)$ is conformally 
  equivalent to $T\sqcup T^*$ where $T^*$ is an anticonformal copy of $T.$
 \end{lemma}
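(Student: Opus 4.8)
The plan is to fold the surfaces outside $T$ into the singular interior of a non-orientable orbifold and then pass to its orientation double cover. Write $S(\Gamma)=T\sqcup T'$, where $T'$ is the complementary subcollection; since $T$ is proper, $T'\neq\emptyset$. I will first produce a non-orientable geometrically finite group $\Gamma_K$ whose boundary is conformally $T$ (the components of $T'$ having been absorbed as interior singular locus), and then take $G$ to be its orientation-preserving index-two subgroup. Over a collar of $T$ the orbifold $M(\Gamma_K)$ is an honest manifold, so in the double cover each component of $T$ lifts to one conformal and one anticonformal copy, giving $S(G)\cong T\sqcup T^*$.

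For the first step I would run the construction in the proof of Brooks's theorem (Theorem \ref{th.geofintor}) relative to the subcollection $T'$. Concretely, choose a $\Gamma$-invariant Beltrami differential supported on the $\Gamma$-orbit of the components of $\Omega(\Gamma)$ lying over $T'$ and vanishing elsewhere, and let $h$ solve the corresponding Beltrami equation. Then $\Gamma_h=h\circ\Gamma\circ h^{-1}$ is geometrically finite and, by the construction of patterns in Theorem \ref{th.geofintor}, $S(\Gamma_h)$ carries a pattern of round disks $K$ that completely covers the surfaces of $T'$. Because the differential vanishes over $T$, the map $h$ is conformal there, so $T$ appears conformally unchanged inside $S(\Gamma_h)$; moreover $K$ is automatically disjoint from $T$, since each disk of $K$ is the projection of a round disk contained in a component of $\Omega(\Gamma_h)$ lying over $T'$.

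Next, form $\Gamma_K=\langle \Gamma_h,\ r_D : D\in\tilde K\rangle$, where $r_D$ is the reflection in $\partial D$ and $\tilde K$ is the family of round disks covering $K$. By Theorem \ref{th.Theorem6} the group $\Gamma_K$ is geometrically finite, and by the first Remark following that theorem, since $K$ completely covers exactly the components of $T'$ and is disjoint from $T$, the boundary $\partial M(\Gamma_K)$ is conformally equivalent to $S(\Gamma_h)\setminus T'=T$, while the stabilizer in $\Gamma_K$ of each component $\Omega_0\subset\Omega(\Gamma_h)$ lying over $T$ coincides with its stabilizer in $\Gamma_h$; in particular this stabilizer is orientation preserving.

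Finally, let $G=\Gamma_K\cap PSL(2,\C)$ be the orientation-preserving subgroup of index two. It is geometrically finite and orientation preserving, so $M(G)$ is an orientable hyperbolic orbifold and $M(G)\to M(\Gamma_K)$ is the orientation double cover. Fix a component $\Omega_0$ over $S_i\in T$ and write $\Gamma_K=G\sqcup rG$ with $r$ one of the reflections. Since $\mathrm{stab}_{\Gamma_K}(\Omega_0)\subset G$ while $r$ reverses orientation, the sets $G\cdot\Omega_0$ and $G\cdot r\Omega_0$ are disjoint, so $S_i$ has exactly two preimages in $S(G)=\Omega(\Gamma_K)/G$: the conformal copy $\Omega_0/\mathrm{stab}(\Omega_0)\cong S_i$ and the anticonformal copy $r\Omega_0/\mathrm{stab}(\Omega_0)\cong S_i^*$. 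As $S(\Gamma_K)=T$, every component of $\Omega(\Gamma_K)$ lies over some $S_i\in T$, so summing over $T$ yields $S(G)\cong T\sqcup T^*$, as required. The main obstacle is the first step: one must arrange a pattern covering exactly $T'$ with the deformation conformal on $T$. I would settle this by noting that the patterns in the proof of Theorem \ref{th.geofintor} are built one boundary component at a time, with the quasiconformal deformations supported in disjoint collar neighborhoods, so the construction can be carried out over the components of $T'$ alone.
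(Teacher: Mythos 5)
Your proposal is correct and follows essentially the same route as the paper: a round disk pattern covering exactly the complementary collection $T'$, the reflection group $\Gamma_K$ made geometrically finite by Theorem \ref{th.Theorem6} and its remark on stabilizers, and then the orientation-preserving index-two subgroup $G=\Gamma_K\cap PSL(2,\C)$, whose boundary splits into the conformal and anticonformal copies of $T$. The only difference is bookkeeping: you localize the Brooks deformation of Theorem \ref{th.geofintor} so that it is conformal over $T$ from the outset, whereas the paper applies it globally, builds the group for the subpattern over $h(T')$, and restores the conformal structure on $T$ by a final quasiconformal deformation; both rest on the same fact that the deformation can be supported on the components of $\Omega(\Gamma)$ lying over $T'$.
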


 \begin{proof}
 Let $T'$ be the complement of the collection $T$ in $S(\Gamma)$. Our goal is to 
 destroy the non-empty collection $T'$. First, 
 assume that there exists a round disk pattern $K$ covering  just $T'$. Let $\Gamma_K$ be the 
 group generated by $\Gamma$ and the reflections with respect to all the disks projecting onto the elements of  $K$ as in the 
 discussion before Theorem \ref{th.Theorem6}. Then by Theorem \ref{th.Theorem6} the hyperbolic
 orbifold $M(\Gamma_K)$ has as underlying space the manifold $M_K$, where $M_K$ is homeomorphic to $M(\Gamma)$. Let $G_K<\Gamma_K$ be the subgroup
 of orientation preserving elements of $\Gamma_K$, then $G_K=\Gamma_K\cap 
 PSL(2,\C)$ is a normal order two subgroup of $\Gamma_K$ containing $\Gamma.$
 By the remark after Theorem \ref{th.Theorem6}, if a component $\Omega_0\in \Omega(\Gamma)$ 
 covers an element of $T$ then the stabilizer of $\Omega_0$ in $G_K$ 
 coincides with the stabilizer of $\Omega_0$ in $\Gamma.$ Then $M(G_K)$ 
 admits an anticonformal involution $\tau$ and  $M(\Gamma_K)=M(G_K)/\tau.$ 
 Hence $\partial M(G_K)=T\sqcup T^*$, where $T^*$ is an anticonformal copy of $T$.
 
 To finish the proof we have to justify the existence of the pattern $K$. By Theorem \ref{th.geofintor} there exists a quasiconformal homeomorphism $h$ and a group $\Gamma_h= h\circ \Gamma \circ h^{-1}$ admitting a disk pattern covering the whole surface $S(\Gamma_h)$.
 
 Let us consider a subpattern 
 $K'\subset K$ precisely covering the collection $h(T')\subset S(\Gamma_h)$ and
 construct a group $G_h$ which uniformizes the surfaces $S(G_h)=h(T)\sqcup 
 (h(T))^*$, taking a suitable quasiconformal deformation  for $G_h$ we obtain 
 a group $G$ as claimed.
 \end{proof}

 The lemma above allows to produce a hyperbolic orbifold structure on a double of the 
 manifold $M(\Gamma)$ with respect to a complementary collection $T'.$ In other 
 words, with this lemma one can endow an orbifold structure on the manifold $W.$
 The following theorem shows that we can put other orbifold hyperbolic structures on 
 $W$ in such a way that the map $\Sigma$ becomes a M\"obius morphism between these 
 structures. 
 
 \begin{theorem}\label{thIndbas} The family of canonical holomorphic 
representatives of any  collection of four polynomials in general position of 
the same degree $d>2$ forms a hyperbolic cobordism.
\end{theorem}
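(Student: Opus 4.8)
The plan is to obtain the cobordism as an orbifold double of the pure cobordism furnished by Theorem~\ref{Purecobordism}, followed by a quasiconformal adjustment of the boundary conformal structures. I use two facts from the introduction: every polynomial in general position is symmetric, and any two polynomials in general position of the same degree lie in a common Hurwitz class $H$. Hence the four given polynomials $P_1,P_2,P_3,P_4$ all belong to $H$; moreover $H$ contains a real polynomial, so $H$ is symmetric and therefore contains anticonformal copies of all of its elements. I would begin by applying Theorem~\ref{Purecobordism} to the pair $(P_1,P_2)$, using that $P_2$ is symmetric. This produces a rational map $R$ and geometrically finite Kleinian groups $G_1<G_2$ of finite index whose inclusion induces a M\"obius morphism $\sigma\colon M(G_1)\to M(G_2)$, with both manifolds homeomorphic to the domain $U$ of the discussion following Theorem~\ref{Purecobordism}. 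The target boundary $\partial M(G_2)$ has three components: a component $S_0$ carrying $R$ and two components already lying in $CH(P_1)$ and $CH(P_2)$; since $\sigma$ is simple, $\sigma^{-1}(S_0)$ is the single source component on which $\sigma$ realizes $R$.

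Next I would eliminate $R$ by doubling across $S_0$. Applying Lemma~\ref{lemma.kap} to $\Gamma=G_2$, with $T$ the two components carrying $P_1,P_2$ and complementary collection $T'=S_0$, yields a geometrically finite group $\tilde G_2$ with $S(\tilde G_2)\cong T\sqcup T^{*}$; thus $M(\tilde G_2)$ is the orientable hyperbolic double $W$ of $M(G_2)$ across $S_0$, whose four boundary components carry $P_1,P_2$ and the anticonformal copies $P_1^{*},P_2^{*}$. The essential point is that this doubling is performed simultaneously for the nested pair. By Theorem~\ref{th.geofintor} the disk pattern consists of round disks lying in $\Omega(G_2)=\Omega(G_1)$, and adjoining the reflections in these circles to $G_1<G_2$ produces nested reflection groups $\Gamma_{1,K}<\Gamma_{2,K}$; the subpattern covering $S_0$, read as round disks in the domain of discontinuity, automatically covers $\sigma^{-1}(S_0)\subset S(G_1)$ as well. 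Passing to the orientation preserving index two subgroups gives $\tilde G_1<\tilde G_2$, and the inclusion realizes the doubled map $\Sigma\colon W\to W$ as a M\"obius morphism whose boundary restriction is the simple four component covering carrying $P_1,P_2,P_1^{*},P_2^{*}$.

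Finally I would replace the reflected components by $P_3,P_4$. Since $P_1^{*},P_2^{*}\in H$, all four boundary coverings of $\Sigma$ lie in the single class $H$, which also contains $P_3$ and $P_4$; by Teichm\"uller's theorem the pairs $(P_1^{*},P_3)$ and $(P_2^{*},P_4)$ differ by quasiconformal homeomorphisms of source and target. I would realize the corresponding deformations on the two reflected target components, pull the Beltrami differentials back through $\Sigma$ onto the matching source components exactly as in the proof of Theorem~\ref{Purecobordism}, set them to zero on the orbits of the $P_1,P_2$ components, and conjugate the nested pair $\tilde G_1<\tilde G_2$ by a solution of the associated Beltrami equation. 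As the four boundary components lie in distinct group orbits, these adjustments are independent and fix the $P_1,P_2$ components; the resulting nested pair of geometrically finite groups gives a M\"obius morphism whose boundary restriction lies in $CH(P_1)\sqcup CH(P_2)\sqcup CH(P_3)\sqcup CH(P_4)$, a pure hyperbolic cobordism of the four canonical representatives containing no univalent components.

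The main obstacle is the compatibility in the doubling step: one must guarantee that the $R$--component on the target and its $\sigma$--preimage on the source are destroyed by the \emph{same} family of round disks in $\Omega(G_1)=\Omega(G_2)$, so that the reflection groups nest and $\Sigma$ is a genuine M\"obius morphism rather than merely a topological double. This forces one to interpret the pattern conditions as $G_2$--invariant, hence $G_1$--invariant, conditions on the disks in the domain of discontinuity; a naive pull back of round disks through the degree $\deg(R)>1$ covering $\sigma$ would not produce round disks downstairs, so working in $\Omega$ rather than on the quotient surfaces is essential.
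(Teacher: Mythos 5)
Your proposal is correct and follows essentially the same route as the paper: apply Theorem~\ref{Purecobordism} to $(P_1,P_2)$, destroy the $R$--components via Lemma~\ref{lemma.kap} by a disk pattern covering only those components, pass to the orientation preserving index two subgroups of the resulting nested reflection groups, and finish with a quasiconformal deformation sending the anticonformal copies into $CH(P_3)$ and $CH(P_4)$. The compatibility issue you flag as the main obstacle is resolved in the paper by exactly the observation you make: since the Möbius morphism is induced by the inclusion $\Gamma_1<\Gamma_2$ with $\Omega(\Gamma_1)=\Omega(\Gamma_2)$, the families of round disks upstairs coincide for both groups, so the reflection groups nest.
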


 \begin{proof}
 Take any pair of polynomials $P_1$ and $P_2$ from the given four. By the Theorem 
 \ref{Purecobordism}, there exists a rational map $R$ such that $R\sqcup P_1 \sqcup 
 P_2$ forms a cobordant family of coverings. Let $\Gamma_1< \Gamma_2$  be 
 the Kleinian groups realizing this cobordism, that is the 
 $\alpha: M_{\Gamma_1}\rightarrow M_{\Gamma_2}$ so that $\alpha$ maps  
 $S(\Gamma_1)=V_0\sqcup V_1 \sqcup V_2$ onto $S(\Gamma_2)=U_0\sqcup 
 U_1\sqcup U_2$ and $(\alpha, \partial M(\Gamma_1),\partial M(\Gamma_2))$
 belongs to the Hurwitz class $CH(R,P_1,P_2)$. Let $H_2$ be the geometrically finite group 
 with $S(H_2)=(U_1\sqcup U_2)\sqcup ((U_1)^*\sqcup (U_2)^*)$ given by 
 Lemma \ref{lemma.kap}, here again $(U_1)^*$ and $(U_2)^*$ are anticonformal copies of $U_1$ and $U_2$, respectively. 
 
 We claim that there exists a finite index subgroup $H_1<
 H_2$ with $S(H_1)=(V_1\sqcup V_2)\sqcup ((V_1)^*\sqcup (V_2)^*)$
 and a projection induced by inclusion of groups $$\beta: M(H_1)\rightarrow M(H_2)$$
 is so that $\beta|_{V_i}$ is conformally equivalent to $P_i$, for $i=1,2$, and 
 $\beta|_{(V_i)^*}$ are anticonformal copies of $P_i$ respectively.
 Indeed, by Brooks's orbifold deformation theorem as used in Lemma \ref{lemma.kap}
 can assume that the group $\Gamma_2$ admits a pattern $K$  which 
 covers only the surface $U_0$.
 
 Since $\alpha$ is  a M\"obius morphism then $K'=\alpha^{-1}(K)\subset V_0$ is also 
 a pattern on $S(\Gamma_1)$ completely covering just the surface $V_0$. 
 Hence the group $G_1$, generated by $\Gamma_1$
 and the reflections with respect to the boundaries of all round disks which projects on all elements of $K'$, is 
 a finite index subgroup of the group $G_2,$ where $G_2$ is generated by $\Gamma_2$
 and the reflections with respect to the boundaries of all disks projecting on all elements of $K.$ In fact,
 these families of disks for $\Gamma_1$ and $\Gamma_2$ coincide.
 
 Therefore the orientation preserving subgroups 
 $H_1=G_1\cap PSL(2,\C)$ and $H_2=G_2 \cap PSL(2,\C)$ are geometrically finite
 Kleinian groups such that $H_1$ has finite index in $H_2.$  Then the groups $H_1< H_2$ are the desired groups as claimed. 
 
Let us note that the anticonformal copies of $P_1$ and $P_2$
 belong to the Hurwitz classes of the polynomials $P_3$ and $P_4$. Indeed all polynomials in general 
 position, of the same degree, belong to the same Hurwitz class. Hence, after 
 a suitable quasiconformal deformation of the pair $H_1$ and $H_2$ we complete the proof of the 
 Theorem.
 \end{proof}

Choose a surface  $S\subset S(G_2)$ together with a pattern precisely covering $S$. 
Repeating the construction above, we construct a cobordism between the canonical 
representatives of six polynomials of the same degree in general position. The iteration of 
this procedure shows the following statement: 

If the canonical holomorphic representatives of a collection of polynomials $P_1,...,P_n$ in 
general position of the same degree $d>2$ forms a hyperbolic cobordism, then for every 
$k<n$, the canonical holomorphic representatives of every collection of $2(n-k)$ 
polynomials in general position  and of degree $d$ forms a hyperbolically cobordant 
family. An induction argument over Theorem \ref{thIndbas} and Theorem \ref{Th.Hurwitz} 
completes the proof of Theorem \ref{TheoremB}. 

Remark. Unfortunately, we were not able to prove the following desirable statement:

\textnormal{Every finite collection of non-univalent rational (polynomial) maps forms a cobordant family.}

This is not clear even in the case of a single rational map $R$ (for more details on this 
problem see \cite{CabMakSie}). 

But if we drop the geometrically finiteness condition in the definition of cobordism then, as it was shown in \cite{CabMakSie}, in 
the case of a single rational map $R$  
the statement above is always  true, the corresponding uniformizing group is totally 
degenerated and the respective 
M\"obius morphism is Hurwitz equivalent to the radial extension of $R$ in the unit $3$-dimensional ball.

\section{On conformal Hurwitz classes and sandwich semigroups. }

According to the discussion above it is interesting to know when two given 
rational maps belong to the same conformal or anticonformal Hurwitz class or, better, 
when these maps are conformally or anticonformally conjugated. It turns out that the answer is 
purely algebraic and does not requires any dynamical information. We give a precise answer
using sandwich products induced by the given rational maps. Also we 
suggest another point of view on Hurwitz classes of rational maps as 
minimal representation spaces of semigroups of holomorphic correspondences 
associated to the given holomorphic coverings.  
 
The results in this section develop ideas in \cite{CabMakSie}, and the Schreier 
representation of semigroups as treated in \cite{CaMakPl}. We start with a brief
introduction to Schreier representations. 

\begin{definition} Let $X$ be a topological space and $End(X)$ be the semigroup 
of continuous endomorphisms of the space $X$. Then 
\begin{enumerate}
 \item $End(X)$ is a topological semigroup.
 \item $X$ canonically embeds in $End(X)$ as the ideal $I$ of constant endomorphisms.
\item $I$ is the unique minimal bi-ideal (left and right) consisting of idempotents.
\item (Schreier Lemma) Let $G< End(X)$ be a subsemigroup with $G\cap I=A\neq 0$
and $\rho: G\rightarrow End(X)$ be a homomorphism then there exists a map 
$f:A\rightarrow X$ such that $f(g(x))=\rho(g)(f(x))$ for all $x\in A$ and $g\in G$, here 
$f(x):=\rho(x)$.
In other words, every homomorphism is generated by a map. Even more,  the map $f$ is continuous if 
and only if $\rho$ is continuous, and $f$ is a homeomorphism if and only if $\rho$ is a 
continuous isomorphism onto its image. We say that $\rho$ is orientation preserving or non-orientable, depending on whether $f$ has the corresponding property.

\item Let $f:Y\rightarrow X$ be a continuous map between topological spaces, then the set
$G$ of all continuous maps $g:X\rightarrow Y$ can be transformed into a semigroup with the  product: $$g_1\ast_{f} g_2=g_1\circ f \circ g_2.$$ This product is called the sandwich 
product with respect to $f$ and $G_f=\langle G, \ast_f\rangle$ is called the sandwich 
semigroup. If $f$ is not invertible then $G_f$ does not contain a unit. 
 \end{enumerate}
\end{definition}

The following theorem appears in \cite{CaMakPl}. For convenience we include the proof.

\begin{theorem}
Let $R_1:\overline{\C}\rightarrow \overline{\C}$ and $R_2:\overline{\C} \rightarrow \overline{\C}$ be two 
rational maps, and $G_1$ and 
$G_2$ be sandwich semigroups of rational maps with respect to $R_1$ and $R_2$, respectively.
If $\rho:G_1\rightarrow G_2$ is an isomorphism, then there exist an element $\gamma\in 
PSL(2,\C)$, and a bijection $\phi:\overline{\C}\rightarrow \overline{\C}$ so that 
$\rho(R)= \phi\circ  R \circ \phi^{-1} \circ \gamma$ for every rational map $R$.
\end{theorem}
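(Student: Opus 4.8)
The plan is to reconstruct the data $(\phi,\gamma)$ from the purely algebraic structure of the sandwich products, in the spirit of the Schreier lemma, rather than from any dynamical information. Throughout I treat $G_i=\langle Rat(\overline{\C}),\ast_{R_i}\rangle$ with $R_1,R_2$ non-constant, so that each $R_i$ is surjective. First I would locate the constants algebraically. Writing $c_a$ for the constant map $z\mapsto a$, one checks $c_a\ast_{R_i}g=c_a$ and $g\ast_{R_i}c_a=c_{g(R_i(a))}$, so the set $I_i=\{c_a:a\in\overline{\C}\}$ is a two-sided ideal of $G_i$; moreover any two-sided ideal $J$ contains every $c_a=c_a\ast_{R_i}g$ (for any $g\in J$), so $I_i$ is the unique minimal ideal. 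Since $\rho$ is an isomorphism it carries the minimal ideal to the minimal ideal, $\rho(I_1)=I_2$, and because $a\mapsto c_a$ identifies $\overline{\C}$ with $I_i$ we obtain a bijection $\phi:\overline{\C}\to\overline{\C}$ characterized by $\rho(c_a)=c_{\phi(a)}$.

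Next I would extract a functional equation by letting $\rho$ act on the relation $R\ast_{R_1}c_a=c_{R(R_1(a))}$. Applying the homomorphism property and comparing constants on both sides gives, for every rational $R$ and every $a\in\overline{\C}$,
$$\rho(R)\bigl(R_2(\phi(a))\bigr)=\phi\bigl(R(R_1(a))\bigr).$$
Specializing to $R=\mathrm{id}$ and setting $\gamma:=\rho(\mathrm{id})$, which is an element of $G_2$ and hence a rational map, yields $\gamma(R_2(\phi(a)))=\phi(R_1(a))$; substituting $z=\phi(a)$ this reads $\gamma\circ R_2=\phi\circ R_1\circ\phi^{-1}$ as honest maps of $\overline{\C}$. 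Feeding $R_1(a)=\phi^{-1}(\gamma(R_2(\phi(a))))$ back into the displayed equation, and using that $R_2\circ\phi$ is surjective, gives $\rho(R)(w)=\phi(R(\phi^{-1}(\gamma(w))))$ for all $w$, i.e. the asserted formula $\rho(R)=\phi\circ R\circ\phi^{-1}\circ\gamma$. The potential ambiguity (different $a$ with the same $R_2(\phi(a))=w$) is ruled out by the $R=\mathrm{id}$ case of the functional equation, which forces $R_2(\phi(a))=R_2(\phi(a'))\Rightarrow R_1(a)=R_1(a')$, so the right-hand side depends only on $w$.

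It then remains to prove $\deg\gamma=1$. Here I would compare fibers in $\gamma\circ R_2=\phi\circ R_1\circ\phi^{-1}$. The fiber of the right-hand map over $w$ is $\phi(R_1^{-1}(\phi^{-1}(w)))$, and since $\phi$ is a bijection its cardinality equals $|R_1^{-1}(\phi^{-1}(w))|$; thus the multiset of fiber cardinalities of $\phi\circ R_1\circ\phi^{-1}$ coincides with that of $R_1$, so cofinitely many fibers have size $\deg R_1$. On the left, $\gamma\circ R_2$ is a genuine rational map whose cofinite fiber size is $\deg\gamma\cdot\deg R_2$. Equating the two cofinite values gives $\deg\gamma\cdot\deg R_2=\deg R_1$. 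Running the identical construction on $\rho^{-1}$ produces $\bar\gamma:=\rho^{-1}(\mathrm{id})$ with $\deg\bar\gamma\cdot\deg R_1=\deg R_2$, whence $\deg\gamma\cdot\deg\bar\gamma=1$ and therefore $\gamma\in PSL(2,\C)$, completing the proof.

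I expect the last step to be the main obstacle. Because $\phi$ is obtained only as a set-theoretic bijection (no continuity is assumed, and indeed none is claimed in the conclusion), one cannot assign a topological degree to $\phi\circ R_1\circ\phi^{-1}$ directly; the argument must instead be phrased in terms of the multiset of fiber cardinalities, which is genuinely preserved under conjugation by an arbitrary bijection, and then closed off by the symmetric use of $\rho^{-1}$ to exclude $\deg\gamma>1$. By contrast, the recognition of the constants as the minimal ideal and the derivation of the functional equation are routine once the sandwich identities $c_a\ast_{R_i}g=c_a$ and $g\ast_{R_i}c_a=c_{g(R_i(a))}$ are in hand.
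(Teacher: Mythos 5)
Your proposal is correct and follows essentially the same route as the paper's own proof: identify the constant maps with points of $\overline{\C}$ to obtain the bijection $\phi$, derive the functional equation $\rho(R)(R_2(\phi(a)))=\phi(R(R_1(a)))$ by pushing constants through $\rho$, and use the symmetric degree comparison coming from $\rho$ and $\rho^{-1}$ to force $\deg\gamma=1$. The differences are only organizational: you characterize the constants as the unique minimal ideal rather than via the absorption identity, you derive the conjugation formula before the degree count (the paper normalizes first with the auxiliary isomorphism $\tau_\gamma(R)=R\circ\gamma^{-1}$), and you justify more carefully, via fiber-cardinality multisets, why conjugation by a merely set-theoretic bijection preserves degree, a point the paper asserts tersely.
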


We say that the homomorphism $\rho$ is orientation preserving or non orientable depending 
on whether $\phi$ has the same property.

\begin{proof}
Let $f=\rho|_{\C}$ be the restriction of $\rho$ to the constants then $f(\overline{\C})\subset 
\overline{\C}$. Indeed for a suitable constant 
$c\in \overline{\C}$ then $c\ast_{R_1} Q=c$  and hence 
$\rho(c)\ast_{R_2} \rho(Q)=\rho(c)$ for every rational map  $Q$. Since $\rho$ is an isomorphism $\rho(Q)$, then $\rho(c)$ is a constant. Also $f$ is a bijection. 
 
 Now we show that $\rho(PSL(2,\C))=PSL(2,\C)$. Indeed, since for every rational maps $R$ 
 and $Q$  we have $\rho(R\ast_{R_1} 
 Q)=\rho(R)\ast_{R_2} \rho(Q)$,  taking $Q=c$ a constant then $f(R\circ R_1(c))=\rho(R)\circ 
 R_2(f(c)).$ As $f$ is 
 invertible we have $deg(R\circ R_1)=deg(\rho(R)\circ R_2).$ Similarly for $\rho^{-1}$ 
 we have $$f^{-1}(R\circ R_2(c))=\rho^{-1}(R)\circ R_1 (f^{-1}(c))$$ and 
 $$deg(R\circ R_2)=deg(\rho^{-1}(R)\circ R_1).$$
 If $R\in PSL(2,\C)$ then $deg(R_1)=deg(R\circ R_1)=deg(\rho(R))\cdot deg(R_2)=deg(\rho(R))\cdot 
 deg(\rho^{-1}(R))\cdot deg(R_1)$ which implies $deg(\rho(R))=1.$
 
 Let $\gamma=\rho(Id)$ so $\gamma\in PSL(2,\C)$. Consider the map 
 $\tau_\gamma:G_2\rightarrow \langle Rat, \ast_{\gamma \circ R_2}\rangle$
 given by $\tau_{\gamma}(R)= R\circ \gamma^{-1}.$ Then $\tau_\gamma$ is an 
 isomorphism of the sandwich semigroup, which follows from direct computation: 
 $$\tau_\gamma(R\ast_{R_2}Q)=\tau_\gamma(R\circ R_2 \circ Q)=R\circ R_2\circ Q \circ 
 \gamma^{-1}=\tau_\gamma(R) \ast_{\gamma\circ R_2} \tau_{\gamma}(Q).$$
 Then $\Phi:\tau_\gamma\circ \rho :G_1\rightarrow \langle Rat, \ast_{\gamma \circ 
 R_2}\rangle$ is an isomorphism satisfying $\Phi(Id)=Id.$ If $R=c$ is a constant then 
 $\Phi(c)=\rho(c)\circ \gamma=\rho(c)=f(c).$ Also 
 $\Phi(R_1)=\Phi(Id\ast_{R_1} Id)=\gamma \circ R_2$, and hence for every $c\in \C$ 
 $$f(R_1(c))=\Phi(Id\ast_{R_1}c)=\gamma\circ R_2(f(c))$$ 
 which implies $R_2=\gamma^{-1}\circ f \circ R_1\circ f^{-1}.$
 We have for every $Q\in Rat(\C)$ and $c\in \C$, 
 $$f(Q\circ R_1(c))=\Phi(Q\ast_{R_1} c)=\Phi(Q)\circ \gamma \circ R_2(f(c))=\Phi(Q)\circ f\circ R_1(c)$$
 then $$Q\circ R_1(c)=f^{-1}\circ \Phi(Q)\circ f \circ R_1(c)$$ and so
 $$\Phi(Q)=f\circ Q \circ f^{-1}$$ but $\Phi(Q)=\tau_\gamma \circ \rho (Q)$ or 
 $\rho(Q)=\Phi(Q)\circ \gamma$ as we wanted to prove. 
 \end{proof}

 As an immediate corollary we have the following. 
 \begin{corollary}\label{cor.sandId}
 If $\rho:G_1\rightarrow G_2$ is an isomorphism as in Theorem \ref{thIndbas} 
and  $\rho(Id)=Id$ then there exists a bijection $f:\overline{\C}\rightarrow 
\overline{\C}$ such that 
 $$\rho(R)=f\circ R \circ f^{-1}.$$
 \end{corollary}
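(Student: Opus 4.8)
The plan is to deduce this directly from the structure theorem just established, whose conclusion already furnishes a fixed element $\gamma\in PSL(2,\C)$ and a bijection $\phi:\overline{\C}\rightarrow\overline{\C}$ with $\rho(R)=\phi\circ R\circ\phi^{-1}\circ\gamma$ for every rational map $R$. Since this is advertised as an immediate corollary, the only real task is to pin down the element $\gamma$ under the additional hypothesis $\rho(Id)=Id$, and no further analytic work should be required.

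First I would recall from the proof of the theorem that $\gamma$ is not an arbitrary M\"obius transformation but is defined there precisely as $\gamma=\rho(Id)$. Consequently the hypothesis $\rho(Id)=Id$ forces $\gamma=Id$ at once. Substituting $\gamma=Id$ into the identity $\rho(R)=\phi\circ R\circ\phi^{-1}\circ\gamma$ collapses it to $\rho(R)=\phi\circ R\circ\phi^{-1}$, so setting $f:=\phi$ produces exactly the claimed conjugation relation.

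There is essentially no obstacle beyond this bookkeeping, because the genuine content has already been extracted in the proof of the theorem: that the restriction $f=\rho|_{\overline{\C}}$ to the ideal of constant endomorphisms is a bijection of $\overline{\C}$, that a degree count forces $\rho$ to map $PSL(2,\C)$ onto itself, and that the auxiliary isomorphism $\Phi=\tau_\gamma\circ\rho$ satisfies $\Phi(Q)=f\circ Q\circ f^{-1}$. The one subtlety worth flagging is to confirm that the bijection $f$ delivered by the corollary is the same $\phi$ appearing in the theorem, so that no new normalization is silently introduced; this is automatic, since when $\gamma=Id$ the conjugating map $\tau_\gamma$ reduces to the identity and $\Phi=\rho$ itself already realizes the conjugation $Q\mapsto f\circ Q\circ f^{-1}$.
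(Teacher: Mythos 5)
Your proposal is correct and is essentially the paper's own (implicit) argument: the paper presents this as an immediate consequence of the preceding theorem, obtained exactly by noting $\gamma=\rho(Id)=Id$ and collapsing the formula $\rho(R)=\phi\circ R\circ\phi^{-1}\circ\gamma$ to a conjugation by $f=\phi$. The only cosmetic simplification available is that you need not reach into the theorem's proof at all: evaluating its stated conclusion at $R=Id$ already yields $\rho(Id)=\phi\circ\phi^{-1}\circ\gamma=\gamma$, so the hypothesis forces $\gamma=Id$ directly.
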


 Next theorem is the main result of this section. 
 \begin{theorem}
  Let $R_1$ and $R_2$ be non constant rational maps and $G_1$ and $G_2$ are 
  the respective sandwich semigroups on $Rat(\C).$ Then 
  \begin{enumerate}
   \item The pair $R_1,R_2$ belongs to the same conformal Hurwitz class if and only if
   there exists a continuous orientation preserving isomorphism $\rho:G_1\rightarrow G_2$.
   Moreover,  $\rho(Id)=Id$ if and only if $R_1$ is $PSL(2,\C)$ conjugated to $R_2.$
   \item A continuous isomorphism $\rho$ reverses orientation if and only if $R_1$ is an 
   anticonformal copy of $R_2.$ Moreover $\rho(Id)=Id$ if and only if $R_1$ is anticonformally conjugated to $R_2.$  
  \end{enumerate}

 \end{theorem}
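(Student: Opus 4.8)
The plan is to extract everything from the preceding theorem, which already forces any isomorphism $\rho:G_1\to G_2$ to have the normal form $\rho(R)=\phi\circ R\circ\phi^{-1}\circ\gamma$ with $\gamma=\rho(Id)\in PSL(2,\C)$ and $\phi$ a bijection of $\overline{\C}$ satisfying $R_2=\gamma^{-1}\circ\phi\circ R_1\circ\phi^{-1}$. Two preliminary observations convert this algebraic datum into geometry. First, by the Schreier Lemma the continuity of $\rho$ is equivalent to the continuity of $\phi$, and since $\rho$ is an isomorphism $\phi$ is in fact a homeomorphism of $\overline{\C}$; moreover the orientation of $\rho$ is, by definition, the orientation of $\phi$. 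Second, the proof of the preceding theorem shows $\rho(PSL(2,\C))=PSL(2,\C)$, so $\phi\circ M\circ\phi^{-1}=\rho(M)\circ\gamma^{-1}\in PSL(2,\C)$ for every M\"obius $M$; applying the same reasoning to $\rho^{-1}$ gives $\phi\circ PSL(2,\C)\circ\phi^{-1}=PSL(2,\C)$.

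The crux is then a rigidity statement: a homeomorphism $\phi$ of $\overline{\C}$ that normalizes the M\"obius group must itself be M\"obius or anti-M\"obius. I would prove this by noting that $M\mapsto\phi\circ M\circ\phi^{-1}$ is a continuous (hence topological) automorphism of $PSL(2,\C)$, because conjugation by the fixed homeomorphism $\phi$ is continuous in the compact-open topology. Every such automorphism is inner up to the complex-conjugation automorphism $c(z)=\overline{z}$, so either $\phi\circ M\circ\phi^{-1}=N\circ M\circ N^{-1}$ or $\phi\circ M\circ\phi^{-1}=(N\circ c)\circ M\circ(N\circ c)^{-1}$ for some fixed $N\in PSL(2,\C)$ and all $M$. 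Consequently $N^{-1}\circ\phi$ (respectively $(N\circ c)^{-1}\circ\phi$) commutes with every M\"obius transformation; since $PSL(2,\C)$ acts $3$-transitively on $\overline{\C}$, the only homeomorphism commuting with all of it is the identity, whence $\phi=N$ is M\"obius or $\phi=N\circ c$ is anti-M\"obius. The orientation of $\phi$ selects the conformal case from the anticonformal one. This normalizer rigidity is the main obstacle, as it is the single point where the purely topological hypotheses (continuity, orientation) are upgraded to conformality.

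With $\phi$ conformal, the identity $R_2=\gamma^{-1}\circ\phi\circ R_1\circ\phi^{-1}$ yields $\psi\circ R_1=R_2\circ\varphi$ for the conformal maps $\varphi=\phi$ (source) and $\psi=\gamma^{-1}\circ\phi$ (target), both M\"obius since $\gamma\in PSL(2,\C)$; this places $R_1$ and $R_2$ in the same conformal Hurwitz class, giving the forward implication of (1). For the converse, given conformal $\varphi,\psi$ with $R_2=\psi\circ R_1\circ\varphi^{-1}$ I would set $\gamma=\varphi\circ\psi^{-1}$ and define $\rho(R)=\varphi\circ R\circ\varphi^{-1}\circ\gamma$; substituting $R_2=\gamma^{-1}\circ\varphi\circ R_1\circ\varphi^{-1}$ into $\rho(R)\ast_{R_2}\rho(Q)$ collapses it to $\varphi\circ(R\circ R_1\circ Q)\circ\varphi^{-1}\circ\gamma=\rho(R\ast_{R_1}Q)$, so $\rho$ is an isomorphism $G_1\to G_2$, it is continuous and orientation preserving because $\varphi$ is, and it lands in the sandwich semigroup of $R_2$ by the choice of $\gamma$. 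Since $\rho(Id)=\gamma$, the equivalence $\rho(Id)=Id\Leftrightarrow\gamma=Id$ together with Corollary \ref{cor.sandId} gives $R_2=\phi\circ R_1\circ\phi^{-1}$ with $\phi$ M\"obius, i.e. $PSL(2,\C)$-conjugacy, completing (1).

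Part (2) runs identically while tracking orientation. An orientation-reversing continuous $\rho$ forces $\phi$ anti-M\"obius by the same rigidity, and then $\alpha=\phi$ and $\beta=\gamma^{-1}\circ\phi$ are both anticonformal and satisfy $\beta\circ R_1=R_2\circ\alpha$, exhibiting $R_1$ as an anticonformal copy of $R_2$; the converse again builds $\rho(R)=\varphi\circ R\circ\varphi^{-1}\circ\gamma$ from the anticonformal intertwining maps, with $\varphi$ anticonformal making $\rho$ orientation reversing. Finally $\rho(Id)=\gamma$, so $\rho(Id)=Id$ specializes the relation to $R_2=\phi\circ R_1\circ\phi^{-1}$ with $\phi$ anticonformal, which is precisely anticonformal conjugacy.
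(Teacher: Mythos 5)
Your proposal is correct, and it takes a genuinely different route at the one step where topology gets upgraded to conformality. The paper, after obtaining the normal form $\rho(Q)=f\circ Q\circ f^{-1}\circ\gamma$ from the preceding theorem, notes that conjugation by $f$ is an automorphism of $(Rat(\C),\circ)$ and cites Proposition 8 of \cite{CaMakPl}, by which any such bijection of $\overline{\C}$ lies in the group generated by $PSL(2,\C)$ and the absolute Galois group of $\C$; continuity and orientation of $\rho$ are then used only at the end, to discard the wild Galois elements and conclude $f\in PSL(2,\C)$ (respectively $\overline{f}\in PSL(2,\C)$ in the orientation-reversing case). You instead spend continuity up front: by the Schreier lemma $\phi$ is a homeomorphism, it normalizes $PSL(2,\C)$ because $\rho$ and $\rho^{-1}$ preserve degree~$1$ maps, conjugation by $\phi$ is then a continuous automorphism of $PSL(2,\C)$, every such automorphism is inner up to the complex-conjugation automorphism, and the centralizer of $PSL(2,\C)$ in the homeomorphism group of $\overline{\C}$ is trivial (your $3$-transitivity argument), so $\phi$ is M\"obius or anti-M\"obius. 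This replaces the cited classification of (possibly discontinuous) semigroup automorphisms of $Rat(\C)$ by the standard Lie-theoretic fact that the outer automorphism group of $PSL(2,\C)$ is $\Z/2$ generated by complex conjugation, making the argument self-contained at the cost of that one external fact; the paper's route is heavier but also explains what happens without continuity. The constructive directions and the $\rho(Id)=Id$ clauses match the paper's in substance, with only the cosmetic difference that you build $\rho(R)=\varphi\circ R\circ\varphi^{-1}\circ\gamma$ with $\gamma=\varphi\circ\psi^{-1}$, while the paper uses the two-sided form $\rho(R)=h^{-1}\circ R\circ g$; both collapse to the same verification of the sandwich identity, and your version has the small advantage of exhibiting $\rho(Id)=\gamma$ explicitly.
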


 \begin{proof}
 Part 1. Assume that $R_1$ and $R_2$ belong to the same conformal Hurwitz class and 
 let $h,g\in PSL(2,\C)$ be so that  $R_2=g^{-1}\circ R_1\circ h.$ Then the map
 $\rho(R)=h^{-1}\circ R \circ g$ defines a continuous orientation preserving 
 isomorphism from $G_1$ to $G_2.$ Indeed, from direct calculation:
 $$\rho(R\ast_{R_1} Q)=h^{-1}\circ R \circ R_1 \circ Q \circ g$$
 $$=\rho(R)\ast_{R_2} \rho(Q).$$
 Now if $\rho:G_1\rightarrow G_2$ is an orientation preserving isomorphism,
 then by  Theorem \ref{thIndbas} there exists a $\gamma\in PSL(2,\C)$ and 
a bijection $f:
 \overline{\C}\rightarrow \overline{\C}$ so that $\rho(Q)=f\circ Q\circ f^{-1}\circ \gamma$ for every
 rational map $Q.$ By conjugation,  $f$ defines an automorphism of $Rat(\C)$ with 
 composition as a product. By Proposition 8 in \cite{CaMakPl}, the map $f$ belongs to 
 the group generated by $PSL(2,\C)$ and the absolute Galois group. Since $\rho$
 is  continuous and orientation preserving we have $f\in PSL(2,\C)$. 
 Now assume $\rho(Id)=Id$, then by  Corollary \ref{cor.sandId}, $\rho(R)=f\circ R \circ f^{-1}$ and 
$f\in PSL(2,\C)$
 and $\rho(R_1)=f\circ R_1 \circ f^{-1}=\rho(Id\ast_{R_1} Id)=R_2$.
 
 Part 2. If $\rho$ is continuous and orientation reversing then $\overline{f}\in PSL(2,\C)$ and the proof
 goes as Part 1.
  \end{proof}
 In conclusion we note:

 First, it is possible to show that every continuous 
 semigroup product on   $Rat(\C)$ which is continuously isomorphic to a 
 sandwich product is  a sandwich product itself. So the classes of continuous 
 isomorphisms of sandwich
 semigroups on $Rat(C)$ correspond to the conformal Hurwitz classes of rational maps.
 
 Second that it is not clear at all how to associate the algebraic characterization of the 
 conformal  Hurwitz class of symmetric rational maps with the geometric cobordisms point of 
 view.
 
 Finally, similar ideas allow us to consider the Hurwitz space as a representation space of
 a special class of holomorphic correspondences. This follows using results from 
 \cite{CaMakPl} with \cite{CabMakSie}. So we can construct a Teichm\"uller space
 of correspondences of the form $R^{-1}\circ R$, called the \textit{deck correspondence 
 associated to} $R.$ From \cite{CabMakSie} it follows that the Speisser class of $R$
 fibers over the Moduli space of the deck with fiber equivalent to the conformal 
 Hurwitz class of $R.$ Let $G_R=\langle R^{-1}\circ R,\overline{\C}\rangle$ be the semigroup of 
 holomorphic correspondences generated by the deck correspondence associated to $R$ 
 and constant maps. Consider the space $\mathcal{X}$  of all representations of $G_R$
 into the semigroup of holomorphic correspondences on $\overline{\C}.$
 Then using results from \cite{CaMakPl}, one can  consider 
 the Speisser class of a rational map $R$ as a subspace of the connected component of $\mathcal{X}$ containing the identity representation.

\bibliographystyle{amsplain} 
\bibliography{workbib}

\end{document}